\newtheorem{lemma}{Lemma}
\newtheorem{theorem}{Theorem}
\newtheorem{corollary}{Corollary}
\newtheorem{defn}{Definition}
\numberwithin{equation}{section}
\numberwithin{defn}{section}
\numberwithin{prop}{section}
\numberwithin{lemma}{section}
\newcommand{\R}{\mathbb{R}}
\newcommand{\Prob}{\mathbb{P}}
\newcommand{\N}{\mathbb{N}}
\newcommand{\Q}{\mathbb{Q}}
\newcommand{\p}{\mathbf{p}}
\newcommand{\W}{\mathcal{W}}
\newcommand{\B}{\mathcal{B}}
\newcommand{\var}{\text{var}}
\newcommand{\D}{\mathcal{D}}
\newcommand{\M}{\mathcal{M}}
\renewcommand{\dim}{\mbox{dim}_{\mathcal{H}}}
\title[Multifractal Analysis on Lalley-Gatzouras Repellers]{Multifractal Analysis for Birkhoff Averages on Lalley-Gatzouras Repellers}
\author{Henry WJ Reeve}
\address{Henry WJ Reeve\\Department of Mathematics\\ The University of Bristol\\
University Walk\\Clifton\\ Bristol\\BS8 1TW\\UK}
\email{henrywjreeve@googlemail.com}
\begin{document}

\thanks{The author would like to thank his supervisor Thomas Jordan for all his help in preparing this paper. This project was started in the Instytut Matematyczny PAN and I would like to thank the Institute, especially Micha\l{} Rams and Feliks Przytycki, for their kind hospitality. I would also like to thank the Engineering and Physical Sciences Research Council and the Conformal Structures and Dynamics network for their financial support.
 }

\begin{abstract}
We consider the multifractal analysis for Birkhoff averages of continuous potentials on a class of non-conformal repellers corresponding to the self-affine limit sets studied by Lalley and Gatzouras. A conditional variational principle is given for the Hausdorff dimension of the set of points for which the Birkhoff averages converge to a given value. This extends a result of Barral and Mensi to certain non-conformal maps with a measure dependent Lyapunov exponent.
\end{abstract}
\maketitle
\section{Introduction and statement of results}
In this paper we consider the multifractal analysis of Birkhoff averages. Let $\Lambda$ be a repeller for a planar map $T: \R^2 \rightarrow \R^2$. Given a continuous potential $\varphi: \Lambda \rightarrow \R$ and $\alpha \in \R$ we are interested in the set of those points in the repeller for which the Birkhoff average converges to $\alpha$
\begin{equation}
\Lambda^{\varphi}_{\alpha}:= \left\lbrace x \in \Lambda : \lim_{n \rightarrow \infty} \frac{1}{n}\sum_{k=0}^{n-1} \varphi(T^k(x))=\alpha\right\rbrace.
\end{equation}
In particular we would like to understand how the Hausdorff dimension $\dim$ of $\Lambda^{\varphi}_{\alpha}$ varies as a function of $\alpha$,
\begin{equation} \label{Birkhoff Spectrum}
\alpha \mapsto \dim \Lambda^{\varphi}_{\alpha}.
\end{equation}
When $T$ is conformal and hyperbolic the function $\alpha \mapsto \dim \Lambda^{\varphi}_{\alpha}$ is well understood (see Pesin and Weiss \cite{Pesin Weiss Birkhoff}, Fan Feng and Wu \cite{Fan Feng Wu}, Barriera and Saussol \cite{Barriera Saussol} and Olsen \cite{Olsen Multifractal 1} for increasingly general results). However, in the non-conformal setting much less is known. Jordan and Simon \cite{Jordan Simon} gave a variational formula for $\dim \Lambda^{\varphi}_{\alpha}$ for typical members of families of piecewise diagonal maps. Barral and Mensi \cite{Barral Mensi} and Barral and Feng \cite{Barral Feng} give a precise formula for $\dim \Lambda^{\varphi}_{\alpha}$ in the setting of Bedford \cite{Bedford} and McMullen \cite{McMullen}.

\pagebreak
We shall prove a conditional variational principle for $\dim \Lambda^{\varphi}_{\alpha}$
for a more general class of piecewise affine maps $T:\R^2 \rightarrow \R^2$ with repellers $\Lambda$ corresponding to the self-affine limit sets studied by Lalley and Gatzouras in \cite{Gatzouras Lalley}.

\begin{figure}
\includegraphics[width=115mm]{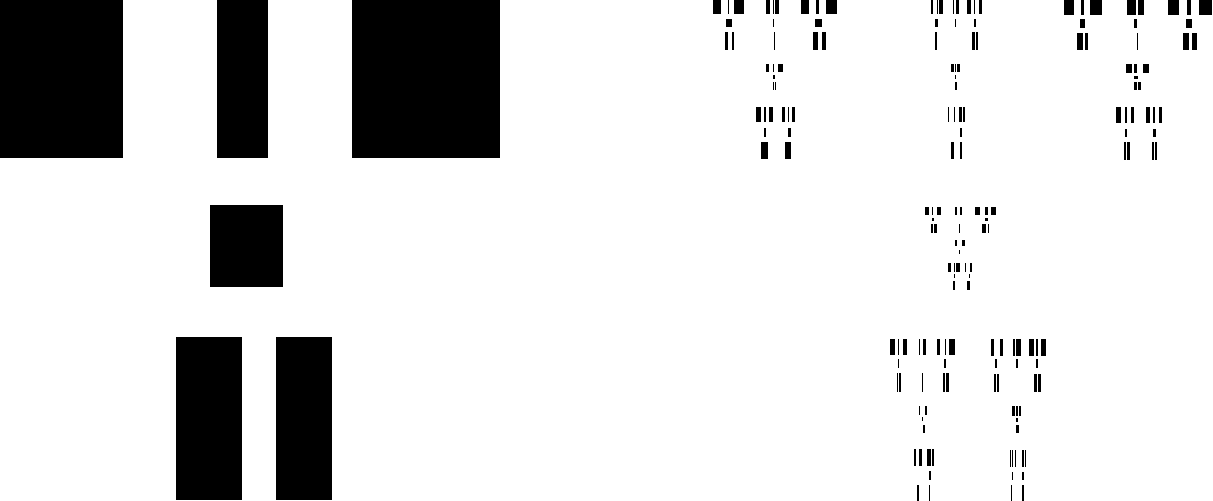}
\caption{A representation of a Lalley-Gatzouras system (left) and the corresponding limit set (right).}
\end{figure}

\begin{defn}[Lalley-Gatzouras Systems] \label{GL IFS def}Suppose we have some index set $\mathcal{D} = \left\lbrace  (i,j) : 1 \leq i \leq p \text{ and } 1 \leq j \leq m_i \right\rbrace$ and for each $(i,j)\in \D$ we define an affine contraction of the form
\begin{equation} S_{ij} (x)= \left(  \begin{array}{ll}
a_{ij} & 0 \\
0 & b_j             \end{array}\right) x + \left(
\begin{array}{l}
c_{ij} \\
d_i             \end{array}\right) \text{ for } x \in [0,1]^2
\end{equation}
where $a_{ij}$, $b_{i}$, $c_{ij}$, $d_{i}$ are fixed members of $[0,1]$, with $b_i$ and $d_i$ depending only on
$i$. Suppose that for each $(i,j)\in \D$ we have $0<a_{ij}\leq b_i<1$. We stipulate that
 $0\leq d_1 \leq d_2 \leq \cdots \leq d_p <1$ with $d_{i+1}-d_i
\geq b_i$ and $b_p+d_p \leq 1$ and for each $i$, $0\leq c_{i1} \leq c_{i2} \leq \cdots \leq c_{im_i} <1$ with
$c_{i (j+1)} -c_{ij}\geq a_{ij}$ and $a_{im_i}+c_{im_i} \leq 1$. We shall refer to a family of affine maps
$\left(S_{ij} \right)_{(i,j)\in \mathcal{D}}$, formed in this way, as a Lalley-Gatzouras system.
\end{defn}

Let $\Sigma:= \D^{\N}$ and $\Sigma_v:=\{1,\cdots,p\}^\N$ be full shift spaces with corresponding left shift
operators denoted by $\sigma: \Sigma \rightarrow \Sigma$ and $\sigma_v: \Sigma_v \rightarrow \Sigma_v$,
respectively. Given $\omega\in \Sigma$ and $n\in\N$ we let $\omega|n\in\D$ denote the finite string consisting of
the first $n$ terms of $\omega$. We define $\pi:\Sigma \rightarrow \Sigma_v$ by $ \pi: ((i_n,j_n))_{n\in\N}
\mapsto (i_n)_{n\in\N} $. Given $n\in \N$ we let $\M_{\sigma^n}(\Sigma)$ denote the set of Borel probability
measures supported on $\Sigma$ which are invariant under $\sigma^n$ and $\B_{\sigma^n}(\Sigma)$ the set of
Bernoulli measures with respect to $\sigma^n$. Similarly, we let $\M_{\sigma_v^n}(\Sigma_v)$ denote the set of
$\sigma_v^n$-invariant measures and  $\B_{\sigma_v^n}(\Sigma_v)$ the set of $\sigma_v^n$-Bernoulli measures. Note
that if $\mu \in \M_{\sigma^n}(\Sigma)$ then $\pi(\mu):=\mu \circ \pi^{-1} \in \M_{\sigma_v^n}(\Sigma_v)$ and if
$\mu \in \B_{\sigma^n}(\Sigma)$ then $\pi(\mu)\in \B_{\sigma_v^n}(\Sigma_v)$. Given $n \in \N$ and $\mu \in
\M_{\sigma^n}(\Sigma)$ we define corresponding Lyapunov exponents by
\begin{equation}
\lambda(\mu, \sigma^n):=-\int \log a_{\omega_1} \cdots a_{\omega_n} d\mu(\omega)\end{equation} and
\begin{equation}
\lambda^v(\mu, \sigma^n):=-\int \log b_{i_1} \cdots b_{i_n} d\mu(\omega).
\end{equation}

We also let $h(\mu,\sigma^n)$ denote the Kolmogorov-Sinai entropy of $\mu$ with respect to $\sigma^n$ and $h^v(\mu,\sigma^n)$ the Kolmogorov-Sinai entropy of $\pi(\mu)$ with respect to $\sigma_v^n$.
\newline
Given $n\in \N$ and $\mu \in \M_{\sigma^n}(\Sigma)$ we define
\begin{equation}
 D^n_{LY}(\mu) =\frac{h(\mu,\sigma^n)}{\lambda(\mu,\sigma^n)} +\bigg(\frac{1}{\lambda^v(\mu,\sigma^n)}-\frac{1}{\lambda(\mu,\sigma^n)}\bigg)h^v(\mu,\sigma^n);
\end{equation}
cf. Ledrappier and Young \cite{LYMED2} Corollary D. We write $D^1_{LY}(\mu)$ as $D_{LY}(\mu)$.
\newline
Let $C(\Sigma)$ denote the set of continuous potentials $\varphi: \Sigma \rightarrow \R$. Given $\varphi \in C(\Sigma)$ and $\alpha \in \R$ we let
\begin{equation}
\Sigma^{\varphi}_{\alpha}:= \left\lbrace \omega \in \Sigma : \lim_{n \rightarrow \infty}
\frac{1}{n}\sum_{l=0}^{n-1} \varphi(\sigma^l(\omega))=\alpha\right\rbrace.
\end{equation}
Let $\alpha_{\min}(\varphi):= \inf\left\lbrace \int \varphi d \mu: \mu \in \M_{\sigma}(\Sigma)\right\rbrace$ and
$\alpha_{\max}(\varphi):= \sup\left\lbrace \int \varphi d \mu: \mu \in \M_{\sigma}(\Sigma)\right\rbrace$. It is
easy to check that $[\alpha_{\min}(\varphi), \alpha_{\max}(\varphi)]=\left\lbrace \alpha \in \R:
\Sigma_{\alpha}^{\varphi}\neq \emptyset \right\rbrace$ (apply \cite{Walters} Theorems  1.14 and 6.9). Given a
potential $\varphi:\Sigma\rightarrow \R$, we define for each $k\in \N$ the $k$th average potential $A_k
(\varphi):\Sigma \rightarrow \R$ by $A_k (\varphi):=\frac{1}{k}\sum_{l=0}^{k-1} \varphi\circ \sigma^l$ and the $k$
variance $\var_k(\varphi):=\sup\left\lbrace |\varphi(\omega)-\varphi(\tau)|: \omega_l=\tau_l\text{ for
}l=1,\cdots,k\right\rbrace$.
\newline
Let $\chi^h:[0,1]^2\rightarrow [0,1]$ denote the horizontal projection given by $(x_1,x_2)\mapsto x_1$ and
$\chi^v:[0,1]^2\rightarrow [0,1]$ the vertical projection $(x_1,x_2)\mapsto x_2$. For each $(i,j) \in \D$ we let
$f_{ij}$ denote the affine map $x\mapsto a_{ij}x+c_{ij}$ and $g_{i}$ denote the affine map $x\mapsto
b_{i}x+d_{i}$. It follows that $f_{ij} \circ \chi^h = \chi^h \circ S_{ij}$ and $g_{i} \circ \chi^v = \chi^v \circ
S_{ij}$.
\newline
Given a finite string $\eta=\eta_1\cdots\eta_n\in \D^n$ we let $S_{\eta}:=S_{\eta_1}\circ\cdots\circ S_{\eta_n}$
and $f_{\eta}:=f_{\eta_1}\circ\cdots\circ f_{\eta_n}$. Similarly given $\zeta=\zeta_1\cdots\zeta_n\in
\{1,\cdots,p\}^n$ we let $g_{\zeta}:=g_{\zeta_1}\circ\cdots\circ g_{\zeta_n}$. There is a natural projection $\Pi:
\Sigma \rightarrow \R^2$ given by
\begin{equation}
\Pi(\omega)= \lim_{n\rightarrow \infty} S_{\omega|n}([0,1]^2).
\end{equation}
Define $\Lambda:= \Pi(\Sigma)$ and for each $\varphi \in C(\Sigma)$ and $\alpha \in \R$ we let  $\Lambda^{\varphi}_{\alpha}:=\Pi(\Sigma^{\varphi}_{\alpha})$. Note that $\Lambda$ is the unique non-empty compact set satisfying $\Lambda= \bigcup_{(i,j)\in \D} S_{ij}(\Lambda)$.
It was shown by Lalley and Gatzouras in \cite{Gatzouras Lalley} that, \vspace{2mm}

\begin{flushleft}
\textbf{Theorem 1.1.} (Lalley and Gatzouras, 1992)\end{flushleft}
\begin{equation*}
 \dim{\Lambda}= \sup \left\lbrace D_{LY}(\mu) : \mu \in \mathcal{B}_{\sigma}(\Sigma) \right\rbrace.
 \end{equation*}

\vspace{2mm}
The central purpose of this paper is to prove Theorem \ref{main}.
\begin{theorem}\label{main} Suppose $\varphi \in C(\Sigma)$. Then for all $\alpha \in [\alpha_{\min}(\varphi), \alpha_{\max}(\varphi)]$ we have
\[\dim{\Lambda^{\varphi}_{\alpha}}= \sup \left\lbrace D_{LY}(\mu): \mu \in \M_{\sigma}(\Sigma), \int \varphi d\mu =\alpha \right\rbrace.  \]
In particular $\alpha \mapsto\dim{\Lambda^{\varphi}_{\alpha}}$ is continuous on $[\alpha_{\min}(\varphi), \alpha_{\max}(\varphi)]$.
\end{theorem}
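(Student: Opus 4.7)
The plan is to establish the variational formula by proving the two inequalities separately, using approximate squares adapted to the non-conformal geometry of the Lalley-Gatzouras repeller, and then to deduce continuity from a semi-continuity argument. For the upper bound
\[
\dim\Lambda^\varphi_\alpha\leq\sup\{D_{LY}(\mu):\mu\in\M_\sigma(\Sigma),\,\int\varphi\,d\mu=\alpha\},
\]
I would for each $\eta>0$ and $N\in\N$ work with the approximating sets $\Sigma^\varphi_{\alpha,\eta,N}:=\{\omega\in\Sigma:|A_k(\varphi)(\omega)-\alpha|<\eta\text{ for all }k\geq N\}$, whose union over $N$ and intersection over $\eta>0$ contain $\Sigma^\varphi_\alpha$. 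At vertical scale $n$ and for each $\omega\in\Sigma$ take $\ell(\omega,n)$ to be the smallest integer with $a_{\omega_1}\cdots a_{\omega_{\ell(\omega,n)}}\leq b_{i_1}\cdots b_{i_n}$, where $\pi(\omega)=(i_k)_{k\in\N}$, and cover $\Pi(\Sigma^\varphi_{\alpha,\eta,N})$ by the approximate squares $f_{\omega|\ell(\omega,n)}([0,1])\times g_{\pi(\omega)|n}([0,1])$. Counting such rectangles by grouping cylinders according to the empirical frequency of their symbols and applying Stirling, one obtains a bound of the form $\sup\{D_{LY}(\mu):|\int\varphi\,d\mu-\alpha|<2\eta\}+o(1)$; then $\eta\to 0$, together with continuity of $\mu\mapsto\int\varphi\,d\mu$ and upper semi-continuity of $h,h^v$ on $\M_\sigma(\Sigma)$, yields the desired inequality.

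For the lower bound I fix $\mu\in\M_\sigma(\Sigma)$ with $\int\varphi\,d\mu=\alpha$ and, for each $\delta>0$, construct a compact $K\subset\Lambda^\varphi_\alpha$ of Hausdorff dimension at least $D_{LY}(\mu)-\delta$. I would approximate $\mu$ by a $\sigma^n$-Bernoulli measure $\mu_n\in\B_{\sigma^n}(\Sigma)$ sharing its $n$-cylinder distribution: by uniform continuity of $\varphi$ and boundedness of $\log a_{ij},\log b_i$ one has $\int\varphi\,d\mu_n\to\int\varphi\,d\mu$, and the entropies and Lyapunov exponents of $\mu_n$ approach those of $\mu$ in such a way that $D^n_{LY}(\mu_n)\to D_{LY}(\mu)$ as $n\to\infty$. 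For such a $\mu_n$, a Moran-type construction on two scales would produce $K$: first choose vertical cylinders $\zeta\in\{1,\dots,p\}^n$ with frequencies matching $\pi(\mu_n)$, then within each vertical cylinder insert horizontal subcylinders of suitable common length with frequencies matching the conditional distribution of $\mu_n$ on the fibre, and iterate. The strong law forces the Birkhoff averages of $\varphi$ at points of $K$ to converge to $\alpha$, placing $K\subseteq\Lambda^\varphi_\alpha$, while a mass-distribution argument with the natural measure on $K$ gives $\dim K\geq D^n_{LY}(\mu_n)-o(1)$; taking $n$ large enough yields the claimed bound.

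The main obstacle is the measure-dependent horizontal Lyapunov exponent $\lambda(\mu,\sigma)$: in the Bedford-McMullen setting of Barral and Mensi all $a_{ij}$ coincide, so $\lambda$ is independent of $\mu$ and the approximate-square depth $\ell(\omega,n)$ depends only on $n$, but here one must track the empirical horizontal symbol frequencies of $\omega$ in the upper bound and arrange the Moran construction to produce the correct mix of contraction rates in the lower bound. This is the reason one must work with $\sigma^n$-Bernoulli approximations over long blocks rather than ordinary Bernoulli measures, and why the formula involves the separate vertical entropy $h^v$. Once the variational formula is established, continuity of $\alpha\mapsto\dim\Lambda^\varphi_\alpha$ on $[\alpha_{\min}(\varphi),\alpha_{\max}(\varphi)]$ follows from weak-$*$ compactness of the constraint sets $\{\mu:\int\varphi\,d\mu=\alpha\}$, together with continuity of $\lambda,\lambda^v$ and upper semi-continuity of $h,h^v$, by a routine interpolation argument between measures attaining nearby values of $\alpha$.
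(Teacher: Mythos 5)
Your sketch identifies the right objects and the right obstacle (the $\omega$-dependence of the horizontal expansion), but both halves of the argument have gaps, and the lower-bound gap is the serious one.

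\textbf{Lower bound.} You propose to fix one $\sigma^n$-Bernoulli approximation $\mu_n$ and build a Moran set $K$ on which the strong law of large numbers gives the Birkhoff limit. But the strong law for $\mu_n$ forces $\frac{1}{m}\sum_{l<m}\varphi(\sigma^l\omega)$ to converge to $\int A_n(\varphi)\,d\mu_n$, not to $\alpha=\int\varphi\,d\mu$; these differ by up to $\var_n(A_n(\varphi))$. So your $K$ lands in $\Lambda^\varphi_{\alpha_n}$ for some $\alpha_n\neq\alpha$, and ``taking $n$ large'' does not place it in $\Lambda^\varphi_\alpha$, because $\Lambda^\varphi_{\alpha_n}$ and $\Lambda^\varphi_\alpha$ are disjoint whenever $\alpha_n\neq\alpha$. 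One cannot escape by invoking continuity of $\alpha\mapsto\dim\Lambda^\varphi_\alpha$, since that is part of what is being proved. The paper resolves this by the Gelfert--Rams construction: rather than a single $\mu_n$, it builds a measure $\W$ that on the $q$-th block of coordinates (of rapidly growing length $\gamma_q-\gamma_{q-1}$) is distributed as a $\sigma^{k(q)}$-Bernoulli approximation $\mu_{k(q)}$ with $k(q)\to\infty$. The block lengths are chosen so that, for every $\omega$ in a positive-$\W$-measure set $S$, the Birkhoff average of $\varphi$ converges to $\alpha$ exactly (not approximately), while the local dimension of $\W$ on approximate squares converges to $D_{LY}(\mu)$ exactly. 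This gluing is the missing idea.

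\textbf{Upper bound.} Covering $\Sigma^\varphi_{\alpha,\eta,N}$ by approximate squares and counting by empirical frequency with Stirling is in the right spirit, but as described it only controls $\int\varphi\,d\mu_\p$ in terms of the level-$1$ frequency vector $\p$, which gives the constraint $|\int\varphi\,d\mu_\p-\alpha|\leq\eta+\var_1(\varphi)$ rather than $|\int\varphi\,d\mu_\p-\alpha|<2\eta$. That error does not shrink with $\eta$. To make it arbitrarily small you need the additional step that the paper performs in Lemma~\ref{upper estimate with var k error}: pass to the $k$-th level Lalley--Gatzouras system $(S_\xi)_{\xi\in\D^k}$, apply the level-$1$ estimate there (which gives error $\var_k(A_k(\varphi))$), and translate back via the correspondence between $\sigma^k$-invariant measures and $\sigma$-invariant measures (Lemma~\ref{averagemeasure}); then let $k\to\infty$. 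I also note that the paper avoids a direct Stirling count by using Baranski's device (Lemma~\ref{Key Lemma for the Upper Bound}): for each $\omega$ a suitable Bernoulli measure $\mu_{\beta(\omega)}$ drawn from a fixed countable dense set $\mathbb{B}$ is used as a reference measure for the pointwise-dimension upper bound of Lemma~\ref{SymbolicDimensionLemmas}(ii). This bypasses the delicate bookkeeping forced by the $\omega$-dependence of $L_n(\omega)$, which a raw Stirling argument would have to face head-on.

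\textbf{Continuity.} Your semicontinuity/interpolation sketch matches Lemma~\ref{continuity} and is fine.
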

Corresponding to each Lalley-Gatzouras IFS satisfying $S_{i_1j_1}([0,1]^2)\cap S_{i_2j_2}([0,1]^2)=\emptyset$ for
$(i_1,j_1)\neq(i_2,j_2)\in \D$ there is an associated piecewise affine planar map $T:\R^2 \rightarrow \R^2$. $T$
is the unique orientation preserving piecewise affine map which sends each rectangle $R_{ij}:=[c_{ij},
c_{ij}+a_{ij}]\times  [d_{i}, d_{i}+b_{i}]$ to $[0,1]^2$ and leaves the rest of the plane fixed. The set
\begin{equation}
\Lambda=\left\lbrace x \in \R^2: T^n(x)\in \bigcup_{(i,j) \in \D} R_{ij} \text{ for all }n \geq 0\right\rbrace
\end{equation}
is a repeller for $T$, known as a Lalley-Gatzouras repeller. The dynamical interest in Theorem \ref{main} is that
it allows us to give the multifractal analysis for Birkhoff averages (see (\ref{Birkhoff Spectrum}) above) for
maps $T: \R^2 \rightarrow \R^2$ of this form.\newline Note that the special case of Theorem \ref{main} in which
each of the maps $S_{ij}$ is a similarity may be deduced from Olsen \cite{Olsen Multifractal 1} Theorem 1.
Moreover, the special case in which there exists constants $a,b$ for which $a_{ij}=a$ and $b_i=b$ for all
$(i,j)\in \D$ was solved by Barral and Mensi in \cite{Barral Mensi} using a weighted version of the thermodynamic
formalism. However, when we are in the non-conformal setting with measure dependent Lyapunov exponents the
thermodynamic formalism does not apply and a different approach is required. For the lower bound, we combine ideas
from Lalley and Gatzouras \cite{Gatzouras Lalley} and Gelfert and Rams \cite{Gelfert Rams}. For the upper bound,
we begin by adapting a technique from Bara\'{n}ski \cite{Baranski} to prove the result for locally constant
potentials before applying an approximation argument to obtain the result in full generality.\newline The paper is
structured as follows. In section \ref{Dimension Lemmas Sec} we recall the notion of an approximate square,
demonstrating how they may be used to give dimension estimates for projections of subsets of the symbolic space.
In section \ref{Proof of the lower bound} we prove the lower bound and in section \ref{Proof of the upper bound}
we prove the upper bound. We conclude with some remarks and an open question.

\section{Dimension Lemmas}\label{Dimension Lemmas Sec}
An estimate for Hausdorff dimension is obtained by finding optimal coverings. In the conformal setting it ordinarily suffices to consider families of projections of cylinder sets. However, in the non-conformal setting the geometric distortion resulting from a difference in expansion between the strong and the weak unstable foliation means that coverings of this form will be highly non-optimal. Instead we follow McMullen \cite{McMullen} and Lalley and Gatzouras \cite{Gatzouras Lalley} in using approximate squares for this purpose.\newline
We define for each $\omega \in \Sigma$ and $n \in \N$
\begin{equation}
L_n(\omega):=\min \left\lbrace l \geq 1 : \prod^l_{\nu=1} a_{i_{\nu} j_{\nu}} \leq \prod^n_{\nu=1} b_{i_{\nu}}\right\rbrace .
\end{equation}
Note that this implies
\begin{equation} \label{Lyapunov Ratios}
a_{\min} < \frac{\prod_{\nu=1}^{L_n(\omega)} a_{i_{\nu} j_{\nu}}}{\prod_{\nu=1}^n b_{i_{\nu}}} \leq 1 .
\end{equation}
Given $(\omega_{\nu})_{\nu=1}^n=((i_{\nu},j_{\nu}))_{\nu =1}^n\in \D^n$ we let
\begin{equation}
[\omega_{1} \cdots \omega_n]:=\{ \omega' \in \Sigma: \omega'_{\nu}= \omega_{\nu} \text{ for } \nu=1, \cdots, n\}
\end{equation}
and
\begin{equation}
[i_{1} \cdots i_n]:=\{ \omega' \in \Sigma: i'_{\nu}= i_{\nu} \text{ for } \nu=1, \cdots, n\}.
\end{equation}
Given $\omega= ((i_{\nu}, j_{\nu}))_{\nu =1}^{\infty}\in \Sigma$ we let $B_n(\omega)$ denote the $n$th approximate symbolic square, \begin{equation}
B_n(\omega):=[\omega_1 \cdots \omega_{L_n(\omega)} ]\cap \sigma^{-L_n(\omega)}[i_{L_n(\omega)+1} \cdots i_n].
\end{equation}
We let $\Delta_n(\omega)$ denote the approximate square corresponding to $B_n(\omega)$, defined by
\begin{equation}
\Delta_n(\omega):=f_{\omega|L_n(\omega)}([0,1])\times g_{\mathbf{i}|n}([0,1]).
\end{equation}
Note that for each $\omega \in \Sigma$ and $n\in \N$, $\Pi(B_n(\omega))\subseteq \Delta_n(\omega)$ and for all $\omega' \notin B_n(\omega)$, $\text{int}(\Delta_n(\omega))\cap \text{int}(\Delta_n(\omega'))=\emptyset$.

We say that the digit set $\D$ is two-dimensional if there exists $(i_1,j_1),(i_2,j_2)\in \D$ with $i_1=i_2$ and $j_1\neq j_2$ and there exists $(i_3,j_3),(i_4,j_4)\in \D$ with $i_3\neq i_4$.
Define for each $d\in \D$
\begin{equation}
R^d_n(\omega):= \min\left\lbrace l>n: \omega_l=d\right\rbrace-n
\end{equation}
and
\begin{equation}
R_n(\omega):= \max\left\lbrace R^d_n(\omega): d\in \D \right\rbrace.
\end{equation}

\begin{lemma} \label{SymbDimPDim} Let $\mu$ be a finite Borel measure on $\Sigma$ and $\nu:= \mu \circ \Pi^{-1}$ the corresponding projection on $\Lambda$.
\begin{enumerate}
\item [(i)] Suppose $\D$ is two-dimensional. Then for all $x=\Pi(\omega) \in \Lambda$ with $\lim_{n\rightarrow \infty} \frac{R_n(\omega)}{n}=0$, \[\liminf_{r \rightarrow 0}\frac{\log \nu( B(x,r))}{\log r}\geq \liminf_{n \rightarrow \infty}\frac{\log \mu(B_n(\omega))}{\log \prod_{\nu=1}^n b_{\nu}}.\]
\item [(ii)] For all $x=\Pi(\omega) \in \Lambda$, \[\liminf_{r \rightarrow 0}\frac{\log \nu(B(x,r))}{\log r}\leq \liminf_{n \rightarrow \infty}\frac{\log \mu(B_n(\omega))}{\log \prod_{\nu=1}^n b_{\nu}}.\]
\end{enumerate}
\end{lemma}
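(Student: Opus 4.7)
\textbf{Plan for Lemma \ref{SymbDimPDim}.}
Part~(ii) is the easier direction and follows from a direct containment. For each $n \in \N$, the approximate square $\Delta_n(\omega)$ contains $x=\Pi(\omega)$ and, by the upper bound in~(\ref{Lyapunov Ratios}), has both sides at most $\prod_{\nu=1}^n b_{i_\nu}$. Setting $r_n:=2\prod_{\nu=1}^n b_{i_\nu}$, we obtain $\Delta_n(\omega)\subseteq B(x,r_n)$ and therefore
\[
\nu(B(x,r_n))\geq \nu(\Delta_n(\omega))\geq \mu(B_n(\omega)).
\]
Taking logarithms, dividing by $\log r_n<0$ to reverse the inequality, using $\log r_n/\log\prod_{\nu=1}^n b_{i_\nu}\to 1$, and invoking that $\liminf_{r\to 0}$ is bounded above by the $\liminf$ along any subsequence, we obtain~(ii).

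For part~(i), fix $s<\liminf_n \log\mu(B_n(\omega))/\log\prod_{\nu=1}^n b_{i_\nu}$; it suffices to show $\nu(B(x,r))\leq r^{s-o(1)}$ as $r\to 0$. Given small $r>0$, let $n=n(r)$ be such that $\prod_{\nu=1}^n b_{i_\nu}$ is comparable to $r$. I cover $B(x,r)\cap\Lambda$ by scale-$r$ approximate squares: for each $\omega'\in \Sigma$ with $\Pi(\omega')\in B(x,r)\cap\Lambda$, take $\Delta_{N(\omega',r)}(\omega')$, where $N(\omega',r)$ is the smallest $N$ with $\prod_{\nu=1}^N b_{i'_\nu}\leq r$. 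By~(\ref{Lyapunov Ratios}) each such square has both sides in $[a_{\min}b_{\min}r,r]$ and therefore area at least $(a_{\min}b_{\min})^2 r^2$. These scale-$r$ squares have pairwise disjoint interiors and all lie in $B(x,3r)$, so an area-packing estimate bounds the cover by a constant $K_0 = K_0(a_{\min}, b_{\min})$, yielding
\[
\nu(B(x,r)\cap\Lambda) \leq \sum_{k=1}^{K_0}\mu\bigl(B_{N(\omega^{(k)},r)}(\omega^{(k)})\bigr).
\]

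The main obstacle is that the hypothesis controls $\mu(B_m(\cdot))$ only along $\omega$, not along the $\omega^{(k)}$ appearing in the cover; this is precisely where the two-dimensionality of $\D$ and the condition $R_n(\omega)/n\to 0$ enter. I aim to show that for $n = n(r)$ large, each $\omega^{(k)}$ contributing to the cover satisfies $\omega^{(k)}_\nu = \omega_\nu$ for every $\nu\leq n-O(R_n(\omega))$, yielding the symbolic nesting $B_{N(\omega^{(k)},r)}(\omega^{(k)})\subseteq B_{n-O(R_n(\omega))}(\omega)$. The two-dimensionality rules out degenerate horizontal or vertical configurations in which level-$n$ cylinders pile up near $x$, while $R_n/n\to 0$ ensures $x$ sits at Euclidean distance at least $r$ from the boundary of the level-$(n-O(R_n))$ cylinder containing it. With this nesting, each summand satisfies $\mu(B_{N(\omega^{(k)},r)}(\omega^{(k)}))\leq \mu(B_{n-O(R_n)}(\omega))\leq\bigl(\prod_{\nu=1}^{n-O(R_n)}b_{i_\nu}\bigr)^s\leq \bigl(b_{\min}^{-O(R_n)}r\bigr)^s$. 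Since $R_n(\omega)=o(n)$ while $|\log r|$ grows linearly in $n$, the correction $b_{\min}^{-sR_n}$ is only $r^{-o(1)}$, and summing the $K_0$ terms yields $\nu(B(x,r))\leq K_0\, r^{s-o(1)}$, hence $\liminf_{r\to 0}\log\nu(B(x,r))/\log r\geq s$. Since $s$ was arbitrary below the symbolic liminf, this proves~(i).
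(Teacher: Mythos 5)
Part~(ii) is fine and follows the paper's argument exactly: one uses the inclusion $\Pi(B_n(\omega))\subseteq B(x,\prod_{\nu=1}^n b_{i_\nu})$ (or your version with an extra factor of $2$, which is harmless) and passes to the $\liminf$ along the subsequence $r_n$.

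For part~(i), you have correctly identified what the two hypotheses are \emph{for}, but the proposal stops precisely where the actual work begins. The sentence ``I aim to show that \ldots each $\omega^{(k)}$ satisfies $\omega^{(k)}_\nu=\omega_\nu$ for $\nu\leq n-O(R_n(\omega))$'' is the entire content of the lemma, and the follow-up sentence, that ``two-dimensionality rules out degenerate configurations'' and that ``$R_n/n\to 0$ ensures $x$ sits at Euclidean distance at least $r$ from the boundary,'' is an assertion, not a proof. The paper proves exactly this claim, and the mechanism is not obvious: two-dimensionality is used to produce two digits $d_1=(i,j_1),d_2=(i,j_2)$ with $j_1\neq j_2$; because $R_{L_n(\omega)}(\omega)$ is small, both occur in the block $\omega_{L_n(\omega)+1}\cdots\omega_{L_n(\omega)+R_{L_n(\omega)}(\omega)}$; swapping one occurrence of $d_1$ for $d_2$ and vice versa produces two sibling $f$-cylinders, disjoint from the one containing $\chi^h(x)$ and of width $\geq a_{\min}^{R_{L_n(\omega)}(\omega)+1}\prod_{\nu=1}^n b_{i_\nu}$, which sandwich $\chi^h(x)$ away from the endpoints of $f_{\omega|L_n(\omega)}([0,1])$; the analogous vertical estimate uses $(i_3,j_3),(i_4,j_4)$ with $i_3\neq i_4$. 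Without this explicit construction the quantitative ``distance at least $r$ from the boundary'' claim is unsupported, and the argument does not close.

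Two smaller points. First, once the nesting claim $\Pi^{-1}(B(x,r))\subseteq B_m(\omega)$ is established (with $m=n-O(R_n(\omega))$), the bounded-multiplicity cover by $K_0$ squares is superfluous: you get $\nu(B(x,r))\leq\mu(B_m(\omega))$ in one step, and the packing estimate adds nothing. Second, the asserted conclusion ``$\omega^{(k)}_\nu=\omega_\nu$ for all $\nu\leq m$'' is stronger than what is true or needed; the correct statement is $\omega^{(k)}\in B_m(\omega)$, i.e.\ full digit agreement only up to $L_m(\omega)$ and agreement of the $i$-coordinates from $L_m(\omega)+1$ to $m$, since beyond $L_m(\omega)$ the approximate square only constrains the vertical ($i$) data. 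This is the inclusion the paper records as $\Pi^{-1}(B(x,r))\subseteq B_{n_r}(\omega)$, and the rest of your bookkeeping (the $b_{\min}^{-O(R_n)}=r^{-o(1)}$ correction, using $R_n/n\to 0$ and $\liminf L_n/n>0$) is correct.
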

\begin{proof} By Lipchitz equivalence it suffices to prove the lemma with respect to the maximum norm on $\R^2$.\newline
To prove \textbf{(i)} we first suppose that $\D$ is two-dimensional and fix  $x=\Pi(\omega) \in \Lambda$ with
$\lim_{n\rightarrow \infty} \frac{R_n(\omega)}{n}=0$. Now the horizontal projection $\chi^h(x)$ is contained
within $f_{\omega|L_n(\omega)+R_{L_n(\omega)}(\omega)}([0,1])$. Since $\D$ is two dimensional, there exists
$d_1=(i_1,j_1)\in\D$ and $d_2=(i_2,j_2)\in \D$ such that $i_1=i_1$ and $j_1\neq j_2$ and without loss of
generality we may suppose that $f_{d_1}(1)\leq f_{d_2}(0)$. By the definition of $R_n(\omega)$ both $d_1$ and
$d_2$ occur within the finite string $\eta_0:=\omega_{L_n(\omega)+1}\cdots\omega_{L_n(\omega)+R_{L_n(\omega)}}$.
Now let $\eta_1$ be the string $\eta_0$ but with an extra occurrence of $d_1$ in place of the first occurrence of
$d_2$ and similarly let $\eta_2$ be $\eta_0$ but with an extra occurrence of $d_2$ in place of the first
occurrence of $d_1$. Now consider the three intervals \begin{eqnarray*} f_{\omega|L_n(\omega)}\circ
f_{\eta_1}([0,1])\hspace{1cm}f_{\omega|L_n(\omega)}\circ f_{\eta_0}([0,1])\hspace{1cm} f_{\omega|L_n(\omega)}\circ
f_{\eta_2}([0,1]).
\end{eqnarray*}
Each interval is of width at least $\prod_{\nu=1}^{L_n(\omega)} a_{i_{\nu} j_{\nu}}\times a_{\min}^{R_{L_n(\omega)}(\omega)}\geq \prod_{\nu=1}^{n} b_{i_{\nu}}\times a_{\min}^{R_{L_n(\omega)}(\omega)+1}$ and is contained within the interval $f_{\omega|L_n(\omega)}([0,1])$. Since the three intervals have disjoint interior and $\chi^h(x)$ is contained within the middle one, it follows that $\chi^h(x)$ is at least $\prod_{\nu=1}^{n} b_{i_{\nu}}\times a_{\min}^{R_{L_n(\omega)}(\omega)+1}$ away from both the left and the right end points of $f_{\omega|L_n(\omega)}([0,1])$. Similarly using the existence of $(i_3,j_3),(i_4,j_4)\in \D$ with $i_3\neq i_4$ we may show that $\chi^v(x)$ is at least $\prod_{\nu=1}^{n} b_{i_{\nu}}\times a_{\min}^{R_{n}(\omega)}$ away from both the left and the right end points of $g_{\mathbf{i}|n}([0,1])$. Thus, we have
\begin{equation}
\Pi\left(B\left(x,\prod_{\nu=1}^{n} b_{i_{\nu}}\times a_{\min}^{\max\{R_{L_n(\omega)}(\omega)+1,R_n(\omega)\}}\right)\right)\subseteq \Delta_n(\omega).
\end{equation}
If we let $n_r:=\max\left\lbrace n\in \N:\prod_{\nu=1}^{n} b_{i_{\nu}}\times
a_{\min}^{\max\{R_{L_n(\omega)}(\omega)+1,R_n(\omega)\}}>r\right\rbrace$ then
$\Pi^{-1}\left(B(x,r)\right)\subseteq B_{n_r}(\omega)$ whilst $\prod_{\nu=1}^{n_r} b_{i_{\nu}}\times
a_{\min}^{\max\{R_{L_{n_r+1}(\omega)}(\omega),R_{n_r+1}(\omega)\}+2}\leq r$.\newline Hence,
\begin{eqnarray*}
\frac{\log \nu(B(x,r))}{\log r} \geq \frac{\log \mu(B_{n_r}(\omega))}{\log \prod_{\nu=1}^{n_r}
b_{i_{\nu}}+{\max\{R_{L_{n_r+1}(\omega)}(\omega),R_{n_r+1}(\omega)\}+2}\log a_{\min}}.
\end{eqnarray*}
Since $\lim_{n\rightarrow \infty}\frac{R_n(\omega)}{n}=0$ and $\liminf_{n\rightarrow \infty}\frac{L_n(\omega)}{n}>0$, \textbf{(i)} follows.

\vspace{.1cm} For \textbf{(ii)} we begin by fixing $x=\Pi(\omega)\in \Lambda$. For each $n\in \N$ the image
$\Pi(B_n(\omega))$ contains $x$ and has diameter not exceeding $\prod_{\nu=1}^n b_{i_{\nu}}$. Thus
$\Pi(B_n(\omega)) \subseteq B(x; \prod_{\nu=1}^n b_{i_{\nu}})$ and hence
\begin{eqnarray} \frac{ \log \nu(B(x; \prod_{\nu=1}^n b_{i_{\nu}}))}{\log \prod_{\nu=1}^n b_{i_{\nu}}}
 \leq \frac{ \log \mu(B_n(\omega))}{\log \prod_{\nu=1}^n  b_{i_{\nu}}}.
\end{eqnarray}
Letting $n\rightarrow \infty$ proves the lemma.

\end{proof}

Recall the following results from geometric measure theory.
\begin{lemma}\label{Dimension Lemmas} Let $\nu$ be a finite Borel measure on some metric space $X$.
\begin{enumerate}
 \item Suppose we have $J\subseteq X$ with $\nu(J)>0$ such that for all $x \in J$
 \[\liminf_{r \rightarrow 0}\frac{\log \nu( B(x,r))}{\log r} \geq d.\]Then $\dim J \geq d$.
 \item Suppose we have $J\subseteq X$ such that for all $x \in J$
 \[\liminf_{r \rightarrow 0}\frac{\log \nu(B(x,r))}{\log r} \leq d.\]Then $\dim J \leq d$.
\end{enumerate}
\end{lemma}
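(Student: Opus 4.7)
\bigskip

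\noindent\textbf{Proof proposal for Lemma \ref{Dimension Lemmas}.}

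This is the standard ``mass distribution principle'' and its upper counterpart, so the plan is to reduce both parts to a Frostman/Vitali argument after a uniformization over the $\liminf$ on small radii.

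For part (1), I would fix an arbitrary $\varepsilon>0$ and reduce to the case where the lower density bound holds uniformly. Define
\begin{equation*}
J_k := \bigl\{ x \in J : \nu(B(x,r)) \leq r^{d-\varepsilon}\text{ for all } 0<r<1/k \bigr\}.
\end{equation*}
The hypothesis guarantees $J = \bigcup_{k\geq 1} J_k$, so by monotone convergence there is some $k_0$ with $\nu(J_{k_0})>0$. Now let $\{U_i\}$ be any countable cover of $J_{k_0}$ by sets of diameter $\delta_i<1/(2k_0)$. For each $U_i$ that meets $J_{k_0}$, pick $x_i\in U_i\cap J_{k_0}$; then $U_i\subseteq B(x_i, 2\delta_i)$, so by the defining property of $J_{k_0}$,
\begin{equation*}
\nu(U_i) \leq \nu(B(x_i,2\delta_i)) \leq (2\delta_i)^{d-\varepsilon}.
\end{equation*}
Summing yields $\sum_i \delta_i^{d-\varepsilon} \geq 2^{-(d-\varepsilon)}\nu(J_{k_0}) > 0$, so $\mathcal{H}^{d-\varepsilon}(J_{k_0})>0$ and hence $\dim J \geq \dim J_{k_0} \geq d-\varepsilon$. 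Letting $\varepsilon\to 0$ gives (1).

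For part (2), I would again fix $\varepsilon>0$ and work at a fixed scale $\delta>0$. For each $x \in J$, the hypothesis supplies some radius $r(x)\in(0,\delta/10)$ with $\nu(B(x,r(x))) \geq r(x)^{d+\varepsilon}$. Apply the $5r$-covering lemma for metric spaces to the collection $\{B(x,r(x))\}_{x\in J}$ to extract a pairwise disjoint subfamily $\{B(x_i, r_i)\}_{i}$ such that $\{B(x_i, 5r_i)\}_i$ covers $J$. Disjointness and finiteness of $\nu$ then give
\begin{equation*}
\sum_i (5r_i)^{d+\varepsilon} \;=\; 5^{d+\varepsilon}\sum_i r_i^{d+\varepsilon} \;\leq\; 5^{d+\varepsilon}\sum_i \nu(B(x_i,r_i)) \;\leq\; 5^{d+\varepsilon}\,\nu(X).
\end{equation*}
Since each $5r_i<\delta$, this cover witnesses $\mathcal{H}^{d+\varepsilon}_\delta(J)\leq 5^{d+\varepsilon}\nu(X)$, uniformly in $\delta$. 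Letting $\delta\to 0$ yields $\mathcal{H}^{d+\varepsilon}(J)<\infty$, so $\dim J \leq d+\varepsilon$, and sending $\varepsilon\to 0$ finishes (2).

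The only real subtlety is the choice of covering lemma in (2): one needs a Vitali-type result that works in a general metric space. The standard $5r$-covering lemma suffices here (no doubling hypothesis is needed because we only require the enlargements to cover, not a disjoint family that essentially covers $\nu$-a.e.). Part (1) has no genuine obstacle beyond the measure-theoretic uniformization step that produces $J_{k_0}$ with positive measure.
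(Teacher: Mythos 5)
Your proof is correct and gives the standard argument; the paper itself does not spell out a proof but cites Falconer, \emph{Techniques in Fractal Geometry}, Proposition 2.2, which uses essentially the same mass-distribution and $5r$-covering reasoning you reproduce. One small point worth making explicit in part (2) is that the disjoint subfamily produced by the $5r$-covering lemma is automatically countable here, since each of its balls has positive $\nu$-measure and $\nu$ is finite, so the sum you write down is meaningful.
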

\begin{proof}
See \cite{Falconer Techniques} Proposition 2.2.
\end{proof}
\begin{lemma}\label{SymbolicDimensionLemmas} Let $\mu$ be a finite Borel measure on $\Sigma$.
\begin{enumerate}
 \item Suppose $\D$ is two dimensional and we have $S\subseteq \Sigma$ with $\mu(S)>0$ such that for all $\omega \in S$ \[\hspace{.75cm}\lim_{n\rightarrow \infty}\frac{R_n(\omega)}{n}=0
 \hspace{.75cm}\text{ and }\hspace{.75cm}
\liminf_{n \rightarrow \infty}\frac{\log\mu( B_n(\omega))}{\log \prod_{\nu=1}^n b_{\nu}} \geq d.\]Then $\dim \Pi(S) \geq d$.
 \item Suppose we have $S\subseteq \Sigma$ such that for all $\omega \in S$ \[\liminf_{n \rightarrow \infty}\frac{\log \mu( B_n(\omega))}{\log \prod_{\nu=1}^n b_{\nu}} \leq d.\]Then $\dim \Pi(S) \leq d$.
\end{enumerate}
\end{lemma}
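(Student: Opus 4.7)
The plan is to deduce this lemma directly from the two preceding results. I would set $\nu := \mu \circ \Pi^{-1}$, the pushforward measure on $\Lambda$, and take $J := \Pi(S) \subseteq \Lambda$. The heart of the argument is already carried out in Lemma \ref{SymbDimPDim}, which converts symbolic $B_n(\omega)$-densities into local dimensions of $\nu$ at $x = \Pi(\omega)$; the job here is essentially to compose that conversion with Lemma \ref{Dimension Lemmas}.

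For part (1), I would first observe that $\nu(J) = \mu(\Pi^{-1}(J)) \geq \mu(S) > 0$, so the mass is preserved under projection. Fix any $x \in J$ and choose a representative $\omega \in S$ with $\Pi(\omega) = x$. Since $\lim_{n\to\infty} R_n(\omega)/n = 0$ and $\D$ is two-dimensional, Lemma \ref{SymbDimPDim}(i) applies at $x$ and gives
\[
\liminf_{r\to 0}\frac{\log \nu(B(x,r))}{\log r} \;\geq\; \liminf_{n\to\infty}\frac{\log\mu(B_n(\omega))}{\log\prod_{\nu=1}^{n}b_{i_\nu}} \;\geq\; d,
\]
where the last inequality is the hypothesis on $\omega \in S$. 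Applying Lemma \ref{Dimension Lemmas}(1) to $\nu$ on the set $J$ then yields $\dim \Pi(S) = \dim J \geq d$.

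For part (2) the argument is even cleaner, since there is no positivity hypothesis to check and no two-dimensionality assumption to invoke. For every $x = \Pi(\omega) \in \Pi(S)$, Lemma \ref{SymbDimPDim}(ii) gives the reverse inequality
\[
\liminf_{r\to 0}\frac{\log \nu(B(x,r))}{\log r} \;\leq\; \liminf_{n\to\infty}\frac{\log\mu(B_n(\omega))}{\log\prod_{\nu=1}^{n}b_{i_\nu}} \;\leq\; d,
\]
and Lemma \ref{Dimension Lemmas}(2) applied to $\nu$ on $\Pi(S)$ delivers $\dim \Pi(S) \leq d$.

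Since all the nontrivial geometry (controlling the Euclidean ball $B(x,r)$ by the approximate square $\Delta_n(\omega)$, and the use of the $R_n(\omega)/n \to 0$ hypothesis to avoid boundary effects of cylinders) has already been absorbed into Lemma \ref{SymbDimPDim}, I do not anticipate a real obstacle here; the only subtlety is the bookkeeping point that a given $x \in \Pi(S)$ may have multiple symbolic preimages, but in both (1) and (2) we only need the existence of one preimage $\omega \in S$ at which the hypothesized symbolic density bound holds, so this causes no difficulty.
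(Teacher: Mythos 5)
Your proof is correct and takes exactly the same route as the paper, which simply says "Combine Lemma \ref{SymbDimPDim} with Lemma \ref{Dimension Lemmas}"; you have merely spelled out the (straightforward) bookkeeping — passing to $\nu = \mu\circ\Pi^{-1}$, noting $\nu(\Pi(S))\geq\mu(S)>0$, and observing that one symbolic preimage in $S$ per point of $\Pi(S)$ suffices.
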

\begin{proof}
Combine Lemma \ref{SymbDimPDim} with Lemma \ref{Dimension Lemmas}.
\end{proof}
Lemma \ref{SymbolicDimensionLemmas} (i) will be used for the lower bound and Lemma \ref{SymbolicDimensionLemmas} (ii) for the upper bound.

\section{Proof of the lower bound}\label{Proof of the lower bound}
The desired lower bound is a supremum of $D_{LY}(\mu)$ over certain invariant measures. In order to obtain a
dimension estimate we need to apply Birkhoff's ergodic theorem, so we must approximate invariant measures by
ergodic ones. However, these approximations have an error term which may cause them to be supported by the wrong
level set. In order to obtain the correct lower bound we follow the approach of Gelfert and Rams in \cite{Gelfert
Rams} and construct a measure which behaves asymptotically like increasingly accurate ergodic approximations to a
given invariant measure. \newline Throughout the proof of the lower bound we fix some
$\alpha\in[\alpha_{\min}(\varphi), \alpha_{\max}(\varphi)]$ and some $\mu \in \M_{\sigma}(\Sigma)$ satisfying
$\int \varphi d\mu=\alpha$. We shall show that $\dim \Lambda_{\alpha}^{\varphi} \geq D_{LY}(\mu)$.  \newline For
each $k\in\N$ we let $\tilde{\mu}_k \in B_{\sigma^k}(\Sigma)$ denote the $k$-th level approximation of $\mu$. That
is, given a cylinder $[\omega_1\cdots \omega_{nk}]$ of length $nk$ we let
\begin{equation}
\tilde{\mu}_k([\omega_1\cdots\omega_{nk}]):=\prod_{l=0}^{n-1}\mu([\omega_{lk+1}\cdots\omega_{lk+k}]).
\end{equation}

\begin{lemma} \label{k Bernoulli convergence} There exists a sequence $\{\mu_k\}$ of  measures $\mu_k \in B_{\sigma^k}(\Sigma)$ satisfying,
\begin{enumerate}
\vspace{4mm}
\item [(i)] $\lim_{k\rightarrow\infty} \frac{1}{k} h(\mu_k,\sigma^k)=h(\mu,\sigma)$
\vspace{4mm}
\item [(ii)] $\lim_{k\rightarrow\infty} \frac{1}{k} h^v(\mu_k,\sigma^k)=h^v(\mu,\sigma)$
\vspace{4mm}
\item [(iii)] $\lim_{k\rightarrow\infty}\frac{1}{k}\lambda(\mu_k,\sigma^k)=\lambda(\mu,\sigma)$
\vspace{4mm}
\item[(iv)] $\lim_{k\rightarrow\infty}\frac{1}{k}\lambda^v(\mu_k,\sigma^k)=\lambda^v(\mu,\sigma)$
\vspace{4mm}
\item [(v)] $\lim_{k\rightarrow\infty} \int A_k \varphi d\mu_k=\alpha.$
\vspace{4mm}
\item [(vi)] For each $k\in \N$ and $(\omega_1,\cdots,\omega_k)\in \D^k$ we have $\mu_k([\omega_1\cdots\omega_k])>0$.
\vspace{4mm}
\end{enumerate}
\end{lemma}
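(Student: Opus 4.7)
The natural candidate is the $k$th level Bernoulli approximation $\tilde{\mu}_k$ defined just before the lemma, adjusted by a vanishingly small perturbation to guarantee strict positivity on $k$-cylinders. For $\tilde{\mu}_k$ itself, properties (iii) and (iv) follow immediately: the marginal of $\tilde{\mu}_k$ on the first $k$ coordinates coincides exactly with that of $\mu$, so combining this with the $\sigma$-invariance of $\mu$ gives $\lambda(\tilde{\mu}_k,\sigma^k) = k\lambda(\mu,\sigma)$ and $\lambda^v(\tilde{\mu}_k,\sigma^k) = k\lambda^v(\mu,\sigma)$ exactly. Properties (i) and (ii) are a standard Shannon-entropy fact: $\tfrac{1}{k}h(\tilde{\mu}_k,\sigma^k)$ equals $\tfrac{1}{k}$ times the Shannon entropy of $\mu$ on the partition of $\Sigma$ into length-$k$ cylinders, which tends to $h(\mu,\sigma)$ because the length-$1$ cylinder partition generates; the same computation applied to $\pi(\mu)$ yields (ii).

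The less routine step is (v). Fix $\eta>0$ and, using continuity of $\varphi$, choose $m$ with $\var_m(\varphi) < \eta$; then let $\varphi_m$ be a function depending only on the first $m$ coordinates with $\|\varphi - \varphi_m\|_\infty \leq \eta$ (take $\varphi_m(\omega):=\varphi(\hat\omega)$ for a fixed representative $\hat\omega$ of each $m$-cylinder). For $0 \leq l \leq k-m$ the function $\varphi_m \circ \sigma^l$ depends only on coordinates $\omega_{l+1},\ldots,\omega_{l+m}$, all lying inside the first length-$k$ block in the Bernoulli structure of $\tilde{\mu}_k$; agreement of marginals together with the $\sigma$-invariance of $\mu$ then give $\int \varphi_m \circ \sigma^l \,d\tilde{\mu}_k = \int \varphi_m \,d\mu$. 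The $m-1$ remaining indices $l$ straddle two blocks and contribute at most $\tfrac{m-1}{k}\|\varphi_m\|_\infty$. Summing over $l$, then sending $k\to\infty$ and only afterwards $\eta \to 0$, yields $\int A_k\varphi\, d\tilde{\mu}_k \to \int \varphi\, d\mu = \alpha$.

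Finally, to secure (vi), I would define $\mu_k \in \B_{\sigma^k}(\Sigma)$ by setting its single-block probability vector on $\D^k$ to be $(1-\epsilon_k)$ times that of $\tilde{\mu}_k$ plus $\epsilon_k$ times the uniform distribution on $\D^k$, for some slowly decaying sequence $\epsilon_k\to 0$ (say $\epsilon_k = k^{-2}$). This immediately produces a positive mass on every length-$k$ cylinder. The first-$k$ marginals of $\mu_k$ and $\tilde{\mu}_k$ differ by at most $2\epsilon_k$ in total variation, so the Lyapunov integrands (each bounded pointwise by $O(k)$) give $\tfrac{1}{k}$-differences of order $\epsilon_k$, preserving (iii) and (iv); a Fannes-type estimate bounds the single-block entropy difference by $O\bigl(\epsilon_k(k+\log(1/\epsilon_k))\bigr)$, which divided by $k$ vanishes, preserving (i) and (ii); and the argument of the previous paragraph, applied now to $\mu_k$ with one extra $O(\epsilon_k)$ perturbation in the block marginals, preserves (v). I expect the principal obstacle to lie in the bookkeeping of (v): the continuity-based truncation of $\varphi$ and the boundary $l$-indices crossing two Bernoulli blocks must be controlled simultaneously and in the correct order of limits ($k\to\infty$ before $\eta\to 0$). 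Everything else is a mild perturbation of the classical Shannon-entropy approximation of an invariant measure by its level-$k$ Bernoulli factor.
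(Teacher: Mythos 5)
Your proof follows essentially the same approach as the paper's: take the $k$th level Bernoulli approximation $\tilde{\mu}_k$, verify (i)--(ii) via the entropy of the $k$-cylinder partition (Kolmogorov--Sinai), get (iii)--(iv) from agreement of the length-$k$ marginal with $\mu$ plus $\sigma$-invariance, handle (v) by exploiting marginal agreement together with a vanishing variation term, and then perturb to secure (vi). The paper states (v) more compactly as $\bigl|\int A_k(\varphi)\,d\tilde{\mu}_k-\int A_k(\varphi)\,d\mu\bigr|\leq\var_k A_k(\varphi)\to 0$, which folds your approximation-by-$\varphi_m$-plus-boundary-indices argument into a single estimate, but the underlying mechanism is identical, as is your explicit convex-combination perturbation for (vi) versus the paper's appeal to continuity.
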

\begin{proof} We begin by observing that parts $(i)-(v)$ are satisfied by $\{\tilde{\mu}_k\}$, the sequence of $k$th level approximations to $\mu$. Indeed parts $(i)$ and $(ii)$ follow from the Kolmogorov-Sinai theorem (see \cite{Walters} Theorem 4.18). Since $\mu$ is $\sigma$ invariant with $\mu$ and $\tilde{\mu_k}$ agreeing on cylinders of length $k$ we have $\lambda(\mu,\sigma)=1/k\lambda(\mu,\sigma^k)=1/k\lambda(\tilde{\mu}_k,\sigma^k)$. $(iv)$ may be proved similarly. To see $(v)$ we note that by $\sigma$ invariance of $\mu$, $\int A_k (\varphi) d\mu=\alpha$ and since $\mu$ and $\tilde{\mu}_k$ agree on cylinders of length $k$ we have $\big|\int A_k (\varphi) d\tilde{\mu}_k-\int A_k (\varphi) d\mu\big|\leq \var_k A_k(\varphi)$. Moreover, by the continuity of $\varphi$ $\var_k A_k(\varphi)\rightarrow 0$ as $k\rightarrow \infty$.\newline
To obtain $\{\mu_k\}$ with $\mu_k\in B_{\sigma^k}(\Sigma)$ satisfying $(vi)$ in addition to conditions $(i)-(v)$
we perturb each $\tilde{\mu}_k$ by a small amount to obtain $\mu_k$ with $\mu_k([\omega_1\cdots\omega_k])>0$ for
each $(\omega_1,\cdots,\omega_k)\in \D^k$ whilst using continuity to insure that
\begin{eqnarray}
 \big|h(\mu_k,\sigma^k)-h(\tilde{\mu}_k,\sigma^k)\big|&<&\frac{1}{k}\\
 \big|h^v(\mu_k,\sigma^k)-h^v(\tilde{\mu}_k,\sigma^k)\big|&<&\frac{1}{k}\\
 \big|\lambda(\mu_k,\sigma^k)-\lambda(\tilde{\mu}_k,\sigma^k)\big|&<&\frac{1}{k}\\
 \big|\lambda^v(\mu_k,\sigma^k)-\lambda^v(\tilde{\mu}_k,\sigma^k)\big|&<&\frac{1}{k}\\
 \big|\int A_k \varphi d\mu_k-\int A_k \varphi d\tilde{\mu}_k\big|&<&\frac{1}{k}.
\end{eqnarray}
\end{proof}
Now choose $\delta_q>0$ for each $q \in \N$ in such a way that $\prod_{q=1}^{\infty}(1-\delta_q)>0$.

\begin{lemma} \label{ q lemma}
For each $q \in \N$ we may choose $k(q),  B(q), N(q) \in \N$ and $S_q \subseteq \Sigma$ with $\mu_{k(q)}(S_q)>1-\delta_q$ such that for all $\omega =(i_{\nu},j_{\nu})_{\nu \in \N} \in S_q$ and $n \in \N$ we have
\begin{enumerate}
\vspace{4mm}
\item [(i)] $\frac{1}{nk(q)}\log \mu_{k(q)}([\omega_1\cdots \omega_{nk(q)}])>-B(q)$
\vspace{4mm}
\item [(ii)] $\frac{1}{nk(q)}\log \mu_{k(q)}([i_1\cdots i_{nk(q)}])>-B(q)$
\vspace{4mm}
\item [(iii)] $\frac{R_n(\omega)}{n}<B(q)$
\vspace{4mm}
\end{enumerate}
and for all $n \geq N(q)$ we have
\begin{enumerate}
\vspace{4mm}
\item [(iv)] $ \big| \frac{1}{n k(q)} \log \mu_{k(q)}([\omega_1 \cdots \omega_{n k(q)}])+h(\mu,\sigma)\big|< \frac{1}{q}$
\vspace{4mm}
\item [(v)] $ \big| \frac{1}{n k(q)} \log \mu_{k(q)}([i_1 \cdots i_{n k(q)}])+h^v(\mu,\sigma)\big|< \frac{1}{q}$
\vspace{4mm}
\item [(vi)] $\big| \frac{1}{n k(q)} \log \prod_{\nu=1}^{nk(q)} a_{i_{\nu}j_{\nu}} +\lambda(\mu,\sigma)\big|< \frac{1}{q}$
\vspace{4mm}
\item[(vii)] $\big| \frac{1}{n k(q)} \log \prod_{\nu=1}^{nk(q)} b_{i_{\nu}} +\lambda^v(\mu,\sigma)\big|< \frac{1}{q}$
\vspace{4mm}
\item [(viii)] $\big| \frac{1}{n k(q)} \sum_{l=0}^{nk(q)-1} \varphi (\sigma^l \omega)-\alpha\big|< \frac{1}{q}$
\vspace{4mm}
\item [(ix)] $\frac{R_{n}(\omega)}{n} < \frac{1}{q}.$
\end{enumerate}
\vspace{4mm}
\end{lemma}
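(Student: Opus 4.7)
The idea is to combine Lemma \ref{k Bernoulli convergence} with Birkhoff's ergodic theorem, the Shannon--McMillan--Breiman (SMB) theorem, and Egorov's theorem, applied to the Bernoulli measures $\mu_k$. Fix $q\in\N$. First, using parts (i)--(v) of Lemma \ref{k Bernoulli convergence}, I would choose $k(q)$ so large that $\tfrac{1}{k(q)}h(\mu_{k(q)},\sigma^{k(q)})$, $\tfrac{1}{k(q)}h^v(\mu_{k(q)},\sigma^{k(q)})$, $\tfrac{1}{k(q)}\lambda(\mu_{k(q)},\sigma^{k(q)})$, $\tfrac{1}{k(q)}\lambda^v(\mu_{k(q)},\sigma^{k(q)})$, and $\int A_{k(q)}\varphi\,d\mu_{k(q)}$ all lie within $\tfrac{1}{2q}$ of $h(\mu,\sigma)$, $h^v(\mu,\sigma)$, $\lambda(\mu,\sigma)$, $\lambda^v(\mu,\sigma)$, and $\alpha$, respectively. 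Because $\mu_{k(q)}\in\B_{\sigma^{k(q)}}(\Sigma)$ has full support on $k(q)$-cylinders by item (vi), conditions (i) and (ii) are automatic and uniform in $\omega$ and $n$: with $m:=\min_{w\in\D^{k(q)}}\mu_{k(q)}([w])>0$, the Bernoulli factorisation gives $\mu_{k(q)}([\omega_1\cdots\omega_{nk(q)}])\geq m^n$, so (i) holds for any sufficiently large $B(q)$, and (ii) follows identically via $\pi(\mu_{k(q)})\in\B_{\sigma_v^{k(q)}}(\Sigma_v)$.

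Next, since $\mu_{k(q)}$ is Bernoulli and hence ergodic under $\sigma^{k(q)}$, SMB applied to $\mu_{k(q)}$ and to $\pi(\mu_{k(q)})$ delivers the a.s.\ convergences behind (iv) and (v), while Birkhoff's theorem applied to the block observables $\log\prod_{\nu=1}^{k(q)}a_{i_\nu j_\nu}$, $\log\prod_{\nu=1}^{k(q)}b_{i_\nu}$, and $A_{k(q)}\varphi$ yields (vi)--(viii); by the choice of $k(q)$ all these limits lie within $\tfrac{1}{2q}$ of the corresponding $\mu$-quantities. For (ix), full support forces every digit $d\in\D$ to occur with a strictly positive limiting frequency along $\mu_{k(q)}$-a.e.\ orbit: writing the positions of $d$ as $l_1<l_2<\cdots$ we have $l_k/k\to 1/p_d$ with $p_d>0$, which forces $(l_{k+1}-l_k)/l_k\to 0$ and therefore $R_n^d(\omega)/n\to 0$; taking the maximum over $d\in\D$ gives $R_n(\omega)/n\to 0$ a.s. Egorov's theorem then provides a measurable set $T_q\subseteq\Sigma$ with $\mu_{k(q)}(T_q)>1-\delta_q/2$ and an integer $N(q)$ on which all of (iv)--(ix) hold uniformly for $n\geq N(q)$.

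To secure (iii), note that on $T_q$ the sequence $R_n(\omega)/n$ is convergent (hence finite) for each $\omega$, but need not be uniformly bounded in $\omega$. By continuity of measure I would restrict to $S_q\subseteq T_q$ on which $\sup_n R_n(\omega)/n\leq B(q)$, enlarging $B(q)$ if necessary so that $\mu_{k(q)}(S_q)>1-\delta_q$; this $S_q$ satisfies all nine conditions.

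\emph{Expected main obstacle.} The delicate step is the $\mu_{k(q)}$-a.s.\ sublinear growth of $R_n$ underpinning (iii) and (ix); this is exactly what will allow Lemma \ref{SymbDimPDim}(i) to be invoked in the lower-bound argument, and it depends crucially on item (vi) of Lemma \ref{k Bernoulli convergence}. Without positivity on every $k(q)$-cylinder some digit could be absent from almost every sample path, rendering $R_n$ infinite. By contrast, items (i), (ii), and (iv)--(viii) follow routinely from the Bernoulli structure together with the standard ergodic theorems.
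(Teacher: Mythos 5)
Your proposal is correct and follows essentially the same strategy as the paper's proof: choose $k(q)$ via Lemma \ref{k Bernoulli convergence}, obtain the almost-everywhere convergences from the ergodic theorem applied to the ergodic measure $\mu_{k(q)}$, and then pass to uniform convergence on a large set $S_q$ via Egorov to extract $B(q)$ and $N(q)$. A few minor stylistic differences are worth noting, none of which change the substance. You invoke Shannon--McMillan--Breiman for (iv)--(v), whereas the paper applies Birkhoff directly; for a $\sigma^{k(q)}$-Bernoulli measure the log-cylinder measure is a sum of i.i.d.\ block observables so Birkhoff alone suffices, but SMB certainly applies. Your observation that (i) and (ii) follow immediately and uniformly from full support (item (vi) of Lemma \ref{k Bernoulli convergence}) via $\mu_{k(q)}([\omega_1\cdots\omega_{nk(q)}])\geq m^n$ is cleaner and more direct than the paper's derivation, which obtains these bounds from uniform convergence with an extra margin. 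Your argument for $R_n/n\to 0$ via occurrence times $l_k$ satisfying $l_k/k\to 1/p_d$ is an equivalent reformulation of the paper's frequency-of-blocks argument, though you should note that the paper works with $k(q)$-block frequencies and handles the residue classes explicitly via the inequality $R_{nk(q)-l}(\omega)\leq R_{nk(q)}(\omega)+l$ for $0\leq l\leq k(q)$; your phrasing in terms of single-digit frequencies is fine since positivity of each $k(q)$-cylinder forces positive frequency of every digit in every residue class. For (iii) you restrict to a further subset via continuity of measure, while the paper gets the uniform bound directly: uniform convergence of $R_n(\omega)/n\to 0$ on the Egorov set already implies a uniform bound for all $n$, since $R_n(\omega)\leq R_{N(q)}(\omega)+N(q)-n$ controls the finitely many small $n$. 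Both routes are valid.
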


\begin{proof}
Fix $q\in \N$. By Lemma \ref{k Bernoulli convergence} (i)-(v) we may choose $k(q) \in \N$ so that
\begin{eqnarray}
\label{k Bernoulli convergence1}\bigg|\frac{1}{k(q)}h(\mu_{k(q)},\sigma^{k(q)})-h(\mu,\sigma)\bigg|&<& \frac{1}{2q}\\
\label{k Bernoulli convergence2}\bigg|\frac{1}{k(q)}h^v(\mu_{k(q)},\sigma^{k(q)})-h^v(\mu,\sigma)\bigg|&<& \frac{1}{2q}\\
\label{k Bernoulli convergence3}\bigg| \int A_{k(q)} \varphi d\mu_{k(q)}-\alpha\bigg|&<& \frac{1}{2q}\\
\label{k Bernoulli convergence4}
\bigg|\frac{1}{k(q)}\lambda(\mu_{k(q)},\sigma^{k(q)})-\lambda(\mu,\sigma)\bigg|&<& \frac{1}{2q}\\
\label{k Bernoulli convergence5}
\bigg|\frac{1}{k(q)}\lambda^v(\mu_{k(q)},\sigma^{k(q)})-\lambda^v(\mu,\sigma)\bigg|&<& \frac{1}{2q}.
\end{eqnarray}

Noting that $\mu_{k(q)} \in \B_{\sigma^{k(q)}}(\Sigma)$ is ergodic with respect to $\sigma^{k(q)}$ we may apply
Birkhoff's ergodic theorem to obtain $\mu_{k(q)}$ almost everywhere convergences
\begin{eqnarray}
\label{entpcon} \lim_{n\rightarrow \infty}\frac{1}{n } \log \mu_{k(q)}([\omega_1 \cdots \omega_{n k(q)}])&=&-h(\mu_{k(q)},\sigma^{k(q)})\\
\label{entpconv} \lim_{n\rightarrow \infty}\frac{1}{n } \log \mu_{k(q)}([i_1 \cdots i_{n k(q)}])&=&-h^v(\mu_{k(q)},\sigma^{k(q)})\\
\label{lyapcon} \lim_{n\rightarrow \infty} \frac{1}{n} \log \prod_{\nu=1}^{nk(q)} a_{i_{\nu}j_{\nu}} &=&-\lambda(\mu_{k(q)},\sigma^{k(q)})\\
\label{lyapconv} \lim_{n\rightarrow \infty} \frac{1}{n} \log \prod_{\nu=1}^{nk(q)} b_{i_{\nu}} &=&-\lambda^v(\mu_{k(q)},\sigma^{k(q)})\\
\label{potentcon} \lim_{n\rightarrow \infty} \frac{1}{n} \sum_{l=0}^{nk(q)-1} \varphi (\sigma^l \omega)&=&\int
A_{k(q)} \varphi d\mu_{k(q)}.
\end{eqnarray}
and for each $(\tau_1,\cdots,\tau_{k(q)})\in \D^{k(q)}$ and $\mu_{k(q)}$ almost every $\omega\in \Sigma$
\begin{equation}\label{digitlistlimits}
\lim_{n\rightarrow\infty}\frac{\#\{l\in\{0,\cdots,n-1\}:\omega_{l{k(q)}+1}\cdots \omega_{l{k(q)}+{k(q)}}=\tau_1\cdots\tau_{k(q)}\}}{n}
\end{equation}
\begin{equation}
=\mu_{k(q)}([\tau_1\cdots\tau_{k(q)}])>0.
\end{equation}
For each of the limits (\ref{digitlistlimits}) to exist we must have
\begin{equation}
\lim_{n\rightarrow \infty} \frac{R^d_{nk(q)}(\omega)}{nk(q)}=0.
\end{equation}
Noting the definition of $R_{n}(\omega)$ along with the fact that $R_{nk(q)-l}(\omega)\leq R_{nk(q)}-l$ for $0\leq l\leq k(q)$ we have
\begin{equation} \label{R con}
\lim_{n\rightarrow \infty} \frac{R_{n}(\omega)}{n}=0.
\end{equation}
for $\mu_{k(q)}$ almost every $\omega \in \Sigma$.\newline By Egorov's theorem, we may take a set $S_q\subseteq
\Sigma$ with $\mu_{k(q)}(S_q)>1-\delta_q$ upon which each of the convergences (\ref{entpcon}), (\ref{entpconv}),
(\ref{lyapcon}), (\ref{lyapconv}), (\ref{potentcon}) and (\ref{R con}) is uniform. In particular, by taking $B(q)
\in \N$ sufficiently large we have
\begin{eqnarray}
 \frac{1}{nk(q)}\log \mu_{k(q)}([\omega_1\cdots \omega_{nk(q)}])&>&-B(q)\\
 \frac{1}{nk(q)}\log \mu_{k(q)}([i_1\cdots i_{nk(q)}])&>&-B(q)\\
\frac{R_{n}(\omega)}{n}&<&B(q)
\end{eqnarray}
for all $n \in \N$ and all $\omega \in S_q$. Moreover by taking $N(q) \in \N$ sufficiently large we have
\begin{eqnarray}
\bigg|\frac{1}{n k(q)} \log \mu_{k(q)}([\omega_1 \cdots \omega_{n k(q)}])+\frac{1}{k(q)}h(\mu_{k(q)},\sigma^{k(q)})\bigg|&<& \frac{1}{2q}\\
\bigg|\frac{1}{n k(q)} \log \mu_{k(q)}([i_1 \cdots i_{n k(q)}])+\frac{1}{k(q)}h^v(\mu_{k(q)},\sigma^{k(q)})\bigg|&<& \frac{1}{2q}\\
\bigg| \frac{1}{n k(q)} \log \prod_{\nu=1}^{nk(q)} a_{i_{\nu}j_{\nu}}+\frac{1}{k(q)}\lambda(\mu_{k(q)},\sigma^{k(q)})\bigg|&<& \frac{1}{2q}\\
\bigg| \frac{1}{n k(q)} \log \prod_{\nu=1}^{nk(q)} b_{i_{\nu}} +\frac{1}{k(q)}\lambda^v(\mu_{k(q)},\sigma^{k(q)})\bigg|&<& \frac{1}{2q}\\
\bigg| \frac{1}{n k(q)} \sum_{l=0}^{nk(q)-1} \varphi (\sigma^l \omega)-\int A_{k(q)} \varphi d\mu_{k(q)}\bigg|&<& \frac{1}{2q}\\
 \frac{R_{n}(\omega)}{n} &<& \frac{1}{q}
\end{eqnarray}
for all $n \geq N(q)$ and all $\omega \in S_q$. Combining these inequalities with the inequalities in (\ref{k Bernoulli convergence1}), (\ref{k Bernoulli convergence2}) and (\ref{k Bernoulli convergence3}) proves the lemma.
\end{proof}
We now construct our measure $\W$. First define a rapidly increasing sequence $(\gamma_q)_{q\in\N\cup\{0\}}$ of natural numbers by $\gamma_0=0$, $\gamma_1=1$ and for $q>1$ we let
\begin{equation}
\gamma_q:=q \gamma_{q-1}  \left(\prod_{l=1}^{q+1}N(l)\right)\left(\prod_{l=1}^{q+1}B(l)\right)\left(\prod_{l=1}^{q+1}k(l)\right)+\gamma_{q-1}.
\end{equation}
We now define a measure $\W$ on $\Sigma$ by first defining $\W$ on a semi-algebra of cylinders and then extending $\W$ to a Borel probability measure on $\Sigma$ via the Daniell-Kolmogorov consistency theorem (\cite{Walters} Theorem 0.5). Given a cylinder  $[\omega_1\cdots\omega_{\gamma_Q}]$ of length $\gamma_Q$ for some $Q \in \N$ we define
\begin{equation*}
\W([\omega_1\cdots\omega_{\gamma_Q}]):=\prod_{q=1}^Q\mu_{k(q)}([\omega_{\gamma_{q-1}+1}\cdots \omega_{\gamma_q}]).
\end{equation*}
Define $S\subseteq \Sigma$ by
\begin{equation}
S:=\bigcap_{q=1}^{\infty}\left\lbrace \omega\in \Sigma:[\omega_{\gamma_{q-1}+1}\cdots\omega_{\gamma_q}]\cap S_q\neq \emptyset\right\rbrace .
\end{equation}
\begin{lemma}\label{S>0}
$\W(S)>0$.
\end{lemma}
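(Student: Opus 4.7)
The plan is to exploit the block-independent structure built into the definition of $\W$.

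First, I would verify that for each $q$ the block length $L_q := \gamma_q - \gamma_{q-1}$ is a multiple of $k(q)$ --- this is immediate from the factor $\prod_{l=1}^{q+1} k(l)$ appearing in the recursive definition of $\gamma_q$ --- so that evaluating $\mu_{k(q)}$, a $\sigma^{k(q)}$-Bernoulli measure, on a cylinder of length $L_q$ makes sense as a product of the relevant $k(q)$-block probabilities. The factorised definition of $\W$ on cylinders $[\omega_1\cdots\omega_{\gamma_Q}]$ then shows that under $\W$ the consecutive coordinate blocks $(\omega_{\gamma_{q-1}+1},\dots,\omega_{\gamma_q})$ are independent across $q$, with the $q$-th block distributed as the projection of $\mu_{k(q)}$ onto its first $L_q$ coordinates. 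The Daniell--Kolmogorov extension preserves this independence on the $\sigma$-algebras generated by individual blocks.

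Second, set $A_q := \{\omega \in \Sigma : [\omega_{\gamma_{q-1}+1}\cdots \omega_{\gamma_q}] \cap S_q \neq \emptyset\}$, so that $S = \bigcap_q A_q$. Each $A_q$ depends only on the $q$-th coordinate block. For every $\omega' \in S_q$ the length-$L_q$ cylinder $[\omega'_1 \cdots \omega'_{L_q}]$ trivially meets $S_q$ (it contains $\omega'$ itself), so the collection of admissible $q$-th blocks $(\tau_1,\dots,\tau_{L_q})$ for which $[\tau_1\cdots\tau_{L_q}] \cap S_q \neq \emptyset$ is a cylindrical set in $\Sigma$ containing $S_q$. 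Consequently
\[ \W(A_q) \;\geq\; \mu_{k(q)}(S_q) \;>\; 1 - \delta_q. \]

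Third, the independence of the blocks under $\W$ yields
\[ \W(S) \;=\; \W\Bigl(\bigcap_{q=1}^{\infty} A_q\Bigr) \;=\; \prod_{q=1}^{\infty} \W(A_q) \;\geq\; \prod_{q=1}^{\infty}(1-\delta_q) \;>\; 0, \]
the final inequality being the very choice of the $\delta_q$ recorded just before the statement.

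There is no real obstacle here: the recursive definition of $\gamma_q$ and the summability condition $\prod_q(1-\delta_q) > 0$ were engineered precisely so that this Borel--Cantelli-type estimate closes. The only point that genuinely deserves care is the observation that $A_q$ is measurable with respect to the $q$-th block alone and that $\mu_{k(q)}$ assigns cylinders of length $L_q$ a well-defined probability --- the first follows from the definition of $A_q$ and the second is guaranteed by the divisibility $k(q) \mid L_q$.
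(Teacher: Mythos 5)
Your proof is correct and takes essentially the same approach as the paper, which simply records the chain $\W(S)\geq\prod_q\mu_{k(q)}(S_q)>\prod_q(1-\delta_q)>0$; you have merely spelled out the block-independence of $\W$ and the inclusion argument giving $\W(A_q)\geq\mu_{k(q)}(S_q)$. The one small distraction is the opening paragraph about $k(q)\mid L_q$: that divisibility is not actually needed here (and fails for $q=1$, where $L_1=1$), since $\mu_{k(q)}$ is a Borel probability measure assigning a well-defined mass to every cylinder regardless of its length, and the product structure of $\W$ across blocks is built directly into its definition on the semi-algebra of cylinders.
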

\begin{proof}$\W(S)\geq \prod_{q=1}^{\infty}\mu_q(S_q)>\prod_{q=1}^{\infty}(1-\delta_q)>0$.
\end{proof}

\begin{lemma}\label{Birkhoff Limits} For all $\omega \in S$ we have
\begin{enumerate}
\vspace{4mm}
\item [(i)] $\lim_{n\rightarrow \infty}\frac{1}{n}\log \prod_{\nu=1}^{n} a_{i_{\nu}j_{\nu}} =-\lambda(\mu,\sigma)$
\vspace{4mm}
\item[(ii)] $\lim_{n\rightarrow \infty}\frac{1}{n} \log \prod_{\nu=1}^{n} b_{i_{\nu}} =-\lambda^v(\mu,\sigma)$
\vspace{4mm}
\item [(iii)] $\lim_{n\rightarrow \infty}\frac{1}{n} \sum_{l=0}^{n-1} \varphi (\sigma^l \omega)=\alpha.$
\end{enumerate}

\end{lemma}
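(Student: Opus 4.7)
The strategy is to exploit the block structure built into the definition of $S$. For each $\omega \in S$ and each $q \in \N$, choose $\tau^{(q)} \in S_q$ whose first $K_q := \gamma_q - \gamma_{q-1}$ coordinates coincide with $\omega_{\gamma_{q-1}+1} \cdots \omega_{\gamma_q}$; such $\tau^{(q)}$ exists by the very definition of $S$. The recursive formula for $\gamma_q$ is designed so that, for $q \geq 2$, $K_q$ is an integer multiple of $k(q)$ with $K_q / k(q) \geq N(q)$, which is exactly the range in which items (iv)--(ix) of Lemma \ref{ q lemma} can be applied to $\tau^{(q)}$ on its first $K_q$ coordinates. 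Given $n \in \N$, let $Q = Q(n)$ be the unique index with $\gamma_{Q-1} \leq n < \gamma_Q$ and set $m := n - \gamma_{Q-1}$; I will split each Birkhoff-type sum into contributions from the $Q - 1$ complete blocks indexed by $q = 1, \ldots, Q-1$ plus a tail coming from the partial $Q$-th block of length $m$.

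For (i) and (ii), the functions $\log a_{i_\nu j_\nu}$ and $\log b_{i_\nu}$ depend only on the single symbol at position $\nu$, so on each complete block the $\omega$-product is literally identical to the corresponding $\tau^{(q)}$-product. Lemma \ref{ q lemma}(vi) applied to $\tau^{(q)}$ with $nk(q) = K_q$ gives
\[ \frac{1}{K_q} \log \prod_{\nu = \gamma_{q-1}+1}^{\gamma_q} a_{i_\nu j_\nu} = -\lambda(\mu, \sigma) + O(1/q), \]
and Lemma \ref{ q lemma}(vii) yields the analogue for the $b$-product. For the partial $Q$-th block I distinguish two cases: if $m \geq k(Q) N(Q)$ then (vi) and (vii) apply again after truncating the final $s < k(Q)$ symbols (which contribute at most $k(Q) \log(1/a_{\min})$); if $m < k(Q) N(Q)$ then the trivial bound on the partial block contributes $O(k(Q) N(Q))$, which is negligible compared to $n \geq \gamma_{Q-1}$ because the defining formula for $\gamma_{Q-1}$ contains $k(Q) N(Q)$ as a factor, so $k(Q) N(Q)/\gamma_{Q-1} \to 0$ as $Q \to \infty$. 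Summing the complete-block estimates, noting that $(\gamma_{Q-1} + m)/n = 1$, and using $Q(n) \to \infty$ yields (i); the identical argument with $b_{i_\nu}$ in place of $a_{i_\nu j_\nu}$ yields (ii).

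For (iii) the situation is subtler because $\varphi(\sigma^l \omega)$ depends on all coordinates $\omega_{l+1}, \omega_{l+2}, \ldots$, not merely on the current block. The key observation is that for $l = \gamma_{q-1} + j$ with $0 \leq j < K_q$, the sequences $\sigma^l \omega$ and $\sigma^j \tau^{(q)}$ agree in their first $K_q - j$ coordinates, so
\[ \bigl| \varphi(\sigma^l \omega) - \varphi(\sigma^j \tau^{(q)}) \bigr| \leq \var_{K_q - j}(\varphi). \]
Summing over $j$ bounds the total swap error on the $q$-th block by $\sum_{k=1}^{K_q} \var_k(\varphi)$; uniform continuity of $\varphi$ on the compact space $\Sigma$ forces $\var_k(\varphi) \to 0$, hence this sum is $o(K_q)$ by a Ces\`aro argument. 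Once the swap has been performed, Lemma \ref{ q lemma}(viii) applied to $\tau^{(q)} \in S_q$ gives $\frac{1}{K_q} \sum_{l=0}^{K_q - 1} \varphi(\sigma^l \tau^{(q)}) = \alpha + O(1/q)$, and the partial $Q$-th block is handled by the same case analysis as in (i), with the trivial bound $|\varphi| \leq \|\varphi\|_\infty$ used when $m < k(Q) N(Q)$.

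The main obstacle is (iii): the tail-dependence of $\varphi$ forces the variance bookkeeping above, and one must verify that the cumulative swap error is $o(n)$, which follows from $\frac{1}{K_q} \sum_{k=1}^{K_q} \var_k(\varphi) \to 0$ together with the super-geometric growth of the block-lengths $K_q$ (which lets the final complete block dominate). Parts (i) and (ii) are by contrast almost immediate, since the relevant functions are cylindrical on a single symbol and no approximation between $\omega$ and $\tau^{(q)}$ is required.
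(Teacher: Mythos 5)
Your proposal is correct and follows essentially the same strategy as the paper's: approximate $\omega$ on each block by the representative $\tau^{(q)}\in S_q$ furnished by the definition of $S$, apply Lemma~\ref{ q lemma} to $\tau^{(q)}$, and (for part (iii)) control the $\omega\leftrightarrow\tau^{(q)}$ swap via the variance sum $\sum_k\var_k(\varphi)=o(K_q)$. The one place you diverge is in how the earlier complete blocks are handled: the paper discards all blocks before the last complete one via the crude bound $\|\varphi\|_\infty$ together with $\gamma_{q_n-1}\leq\gamma_{q_n}/q_n\leq n/q_n$, whereas you apply the block estimate to every complete block and absorb the errors by a weighted Ces\`aro argument; both routes are valid and rest on the same construction of $(\gamma_q)$.
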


\begin{proof} We shall prove part (iii). The proofs for parts (i) and (ii) are similar.
Fix $\omega\in S$ and choose for each $q \in \N$ some $\tau^q \in \Sigma$ such that $\sigma^{\gamma_{q-1}} \tau_q
\in [\omega_{\gamma_{q-1}+1}\cdots\omega_{\gamma_q}]\cap S_q$. Given $n\in \N$ we choose $q_n$ so that
$\gamma_{q_n}\leq n$ is maximal. Since $\gamma_{q_n}-\gamma_{q_n-1}\geq N(q_n)$ and $\gamma_{q_n}-\gamma_{q_n-1}
\leq n$ we have
\begin{eqnarray} \label{B1'}
\bigg|\sum_{l=\gamma_{q_n-1}}^{\gamma_{q_n}-1} \varphi (\sigma^l \tau^{q_n})-(\gamma_{q_n}-\gamma_{q_n-1})\alpha\bigg|<\frac{n}{q_n}
\end{eqnarray}
by Lemma \ref{ q lemma} (viii). So by our choice of $\tau^q$ and $\gamma_{q_n}-\gamma_{q_n-1}\leq n$ we have
\begin{eqnarray} \label{B1}
\bigg|\sum_{l=\gamma_{q_n-1}}^{\gamma_{q_n}-1} \varphi (\sigma^l \omega)-(\gamma_{q_n}-\gamma_{q_n-1})\alpha\bigg|<\frac{n}{q_n}+\sum^{n-1}_{l=0}\var_{l}(\varphi).
\end{eqnarray}
By the construction of $(\gamma_q)_{q \in \N}$, $\gamma_{q_n-1}\leq \gamma_{q_n}/q_n \leq n/q_n$ and hence
\begin{eqnarray} \label{B2}
\bigg|\sum_{l=0}^{\gamma_{q_n-1}-1} \varphi (\sigma^l \omega)-\gamma_{q_n-1}\alpha\bigg|<\frac{n}{q_n}(||\varphi||_{\infty}+\alpha).
\end{eqnarray}
Now either $n-\gamma_{q_n}\geq N(q_n+1)$ or $n -\gamma_{q_n}\leq N(q_n+1)$. In the former case we reason as in (\ref{B1'}) and (\ref{B1}) to obtain
\begin{eqnarray} \label{B3}
\bigg|\sum_{l=\gamma_{q_n}}^{n-1} \varphi (\sigma^l \omega)-(n-\gamma_{q_n})\alpha\bigg|<\frac{n}{q_n}+\sum^{n-1}_{l=0}\var_{l}(\varphi).
\end{eqnarray}
In the latter case, by the construction of $(\gamma_q)_{q \in \N}$ we have $N(q_n+1) \leq \gamma_{q_n}/q_n \leq n/q_n$ and hence

\begin{eqnarray} \label{B4}
\bigg|\sum_{l=\gamma_{q_n}}^{n-1} \varphi (\sigma^l \omega)-(n-\gamma_{q_n})\alpha\bigg|<\frac{n}{q_n}(||\varphi||_{\infty}+\alpha).
\end{eqnarray}
Thus, by (\ref{B1}), (\ref{B2}), (\ref{B3}), (\ref{B4}) we have
\begin{eqnarray}
\bigg|\sum_{l=0}^{n-1}\varphi(\sigma^l\omega)-\alpha\bigg|\leq\frac{2}{q_n}+\frac{2}{n}\sum_{l=0}^{n}\var_{l}\varphi+(||g||_{\infty}+|\alpha|)\frac{2}{q_n}.
\end{eqnarray}
Note that since $\varphi$ is continuous we have $\frac{1}{n}\sum_{l=0}^{n}\var_{l}(\varphi) \rightarrow 0$. Thus, dividing by $n$ and letting $n\rightarrow \infty$ proves the lemma.
\end{proof}

\begin{lemma} \label{SBirkhoff} $S\subseteq \Sigma_{\alpha}^{\varphi}$.
\end{lemma}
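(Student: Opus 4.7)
The plan is essentially to read off the statement from Lemma \ref{Birkhoff Limits} together with the definition of $\Sigma^{\varphi}_{\alpha}$. Recall that
\[
\Sigma^{\varphi}_{\alpha} = \left\{\omega \in \Sigma : \lim_{n\to\infty}\frac{1}{n}\sum_{l=0}^{n-1}\varphi(\sigma^l\omega) = \alpha\right\},
\]
so the containment $S \subseteq \Sigma^{\varphi}_{\alpha}$ amounts exactly to the assertion that every $\omega \in S$ satisfies this Birkhoff limit. Since Lemma \ref{Birkhoff Limits}(iii) is precisely the statement that for all $\omega\in S$,
\[
\lim_{n\to\infty}\frac{1}{n}\sum_{l=0}^{n-1}\varphi(\sigma^l\omega)=\alpha,
\]
there is no further work to do: the conclusion follows immediately by unpacking the definition of $\Sigma^{\varphi}_{\alpha}$.

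So the whole proof reduces to a one-line appeal to the previous lemma. There is no genuine obstacle here; the real content sits in Lemma \ref{Birkhoff Limits}, where the asymptotic Birkhoff average along $\omega\in S$ is controlled by comparing blocks of $\omega$ to nearby points $\tau^q \in S_q$ on each concatenation block $[\gamma_{q-1}+1,\gamma_q]$, using Lemma \ref{ q lemma}(viii) and uniform continuity of $\varphi$ (via $\var_l(\varphi)\to 0$) to pay the substitution cost, and using the rapid growth of $(\gamma_q)$ to absorb the contributions from the initial segment $[0,\gamma_{q_n-1}-1]$ and any leftover tail $[\gamma_{q_n},n-1]$. Once that limit is known, the present lemma is a purely formal consequence.

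Concretely, I would simply write: fix $\omega\in S$; by Lemma \ref{Birkhoff Limits}(iii), $\frac{1}{n}\sum_{l=0}^{n-1}\varphi(\sigma^l\omega)\to \alpha$; hence $\omega \in \Sigma^{\varphi}_{\alpha}$. As this holds for every $\omega\in S$, we conclude $S\subseteq \Sigma^{\varphi}_{\alpha}$, as required.
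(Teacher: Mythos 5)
Your proof is correct and matches the paper's own argument exactly: both reduce the claim to a direct appeal to Lemma \ref{Birkhoff Limits}(iii) together with the definition of $\Sigma^{\varphi}_{\alpha}$. Nothing further is needed.
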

\begin{proof} $S\subseteq \Sigma_{\alpha}$ is precisely Lemma \ref{Birkhoff Limits} (iii).
\end{proof}

\begin{lemma} \label{EntropyLemma} For all $\omega=(i_{\nu},j_{\nu})_{\nu \in \N}  \in S$ we have
\begin{enumerate}
\vspace{4mm}
\item [(i)] $ \lim_{n\rightarrow \infty}\frac{1}{n} \log \W([\omega_1 \cdots \omega_{n}])=-h(\mu,\sigma)$
\vspace{4mm}
\item [(ii)]  $ \lim_{n\rightarrow \infty}\frac{1}{n} \log \W([i_1 \cdots i_{n}])=-h^v(\mu,\sigma)$.
\end{enumerate}
\end{lemma}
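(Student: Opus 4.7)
The plan is to exploit the product structure of $\W$ on $\gamma_Q$-cylinders, extend to cylinders of arbitrary length via Daniell--Kolmogorov consistency, and then apply Lemma \ref{ q lemma} block-by-block. The argument for (i) and (ii) is essentially the same; for (ii) one first notes that marginalising the definition of $\W$ over $j$-coordinates yields $\W([i_1\cdots i_{\gamma_Q}])=\prod_{q=1}^{Q}\mu_{k(q)}([i_{\gamma_{q-1}+1}\cdots i_{\gamma_q}])$, after which one repeats the argument below with Lemma \ref{ q lemma}(v) replacing (iv). I therefore describe the proof of (i) only.

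First I would derive the formula
\begin{equation*}
\W([\omega_1\cdots\omega_n])=\prod_{q=1}^{q_n}\mu_{k(q)}\bigl([\omega_{\gamma_{q-1}+1}\cdots\omega_{\gamma_q}]\bigr)\cdot\mu_{k(q_n+1)}\bigl([\omega_{\gamma_{q_n}+1}\cdots\omega_n]\bigr)
\end{equation*}
for $n$ satisfying $\gamma_{q_n}\leq n<\gamma_{q_n+1}$, by summing the defining expression for $\W$ on $\gamma_{q_n+1}$-cylinders over the remaining coordinates. For $\omega\in S$ I would then pick, for each $q$, a witness $\tau^q$ with $\sigma^{\gamma_{q-1}}\tau^q\in S_q$ that agrees with $\omega$ on the $q$th block $\{\gamma_{q-1}+1,\dots,\gamma_q\}$. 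By construction $(\gamma_q-\gamma_{q-1})/k(q)$ is an integer exceeding $N(q)$, so Lemma \ref{ q lemma}(iv) applied to $\sigma^{\gamma_{q-1}}\tau^q$ gives
\begin{equation*}
\bigl|\log\mu_{k(q)}([\omega_{\gamma_{q-1}+1}\cdots\omega_{\gamma_q}])+(\gamma_q-\gamma_{q-1})h(\mu,\sigma)\bigr|<(\gamma_q-\gamma_{q-1})/q.
\end{equation*}
Summing over $q\leq q_n$ produces the main contribution $-\gamma_{q_n}h(\mu,\sigma)$ with total error at most $\sum_{q\leq q_n}(\gamma_q-\gamma_{q-1})/q$; the super-exponential growth of $(\gamma_q)$ forces $\gamma_{q_n-1}\leq n/q_n$, so this error is $o(n)$.

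The main obstacle will be the tail factor $\mu_{k(q_n+1)}([\omega_{\gamma_{q_n}+1}\cdots\omega_n])$: the index $n$ may lie anywhere in the enormous interval $[\gamma_{q_n},\gamma_{q_n+1})$, so the tail is not automatically negligible. I would split on whether $n-\gamma_{q_n}\geq N(q_n+1)k(q_n+1)$. In the large regime, setting $m_n:=\lfloor(n-\gamma_{q_n})/k(q_n+1)\rfloor\geq N(q_n+1)$, I sandwich the tail cylinder between the aligned cylinders of lengths $m_nk(q_n+1)$ and $(m_n+1)k(q_n+1)$, apply Lemma \ref{ q lemma}(iv) to $\sigma^{\gamma_{q_n}}\tau^{q_n+1}\in S_{q_n+1}$ to each, and observe that the gap $k(q_n+1)\leq\gamma_{q_n}/q_n\leq n/q_n$ is negligible by the construction of $\gamma_{q_n}$. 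In the small regime the same inequality $N(q_n+1)k(q_n+1)\leq n/q_n$ shows $n-\gamma_{q_n}=o(n)$, and the crude bound supplied by Lemma \ref{ q lemma}(i) forces the tail's log-measure to be $o(n)$ as well. In either regime the tail contributes $-(n-\gamma_{q_n})h(\mu,\sigma)+o(n)$, and combining this with the block sum yields $n^{-1}\log\W([\omega_1\cdots\omega_n])\to -h(\mu,\sigma)$, as required.
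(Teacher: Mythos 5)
Your proposal is correct and follows the same block-decomposition strategy as the paper: exploit the product structure of $\W$ over the $\gamma$-blocks, apply Lemma \ref{ q lemma} via witnesses $\tau^q$ with $\sigma^{\gamma_{q-1}}\tau^q\in S_q$, and split on whether the tail $n-\gamma_{q_n}$ is large or small relative to $N(q_n+1)$. Your only deviations are minor refinements --- you apply the sharp estimate (iv) to every block $q\leq q_n$ where the paper uses the crude bound (i) together with $B(l)\gamma_{q_n-1}\leq n/q_n$ for the blocks preceding $q_n$, and you make the $k(q_n+1)$-alignment of the tail cylinder explicit via a sandwich between aligned cylinders, a point the paper passes over in silence --- and both treatments deliver the same $o(n)$ error.
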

\begin{proof} Both proofs resemble that of Lemma \ref{Birkhoff Limits}. We prove only part $(i)$ since the proof of part $(ii)$ is similar.\newline
Take $\omega \in S$. Given $n\in \N$ we choose $q_n$ so that $\gamma_{q_n}\leq n$ is maximal. Since $\gamma_{q_n}-\gamma_{q_n-1}\geq N(q_n)$ and $\gamma_{q_n}-\gamma_{q_n-1} \leq n$ we have
\begin{eqnarray} \label{ENT1}
\big| \log \mu_{k(q_n)}([\omega_{\gamma_{q_n-1}+1} \cdots \omega_{\gamma_{q_n}}])+(\gamma_{q_n}-\gamma_{q_n-1})h(\mu,\sigma)\big|&<& \frac{n}{q_n}
\end{eqnarray}
by Lemma \ref{ q lemma} (iv). Moreover, by the construction of $(\gamma_q)_{q \in \N}$,
\begin{equation}
\max\{B(l): l\leq q_n\} \gamma_{q_n-1}\leq \gamma_{q_n}/q_n \leq n/q_n
\end{equation}
 so by Lemma \ref{ q lemma} (i)
\begin{eqnarray} \label{ENT2}
\bigg|\sum_{l=1}^{q_n-1}\log \mu_{k(l)}([\omega_{\gamma_{l-1}+1}\cdots
\omega_{\gamma_l}])+\gamma_{q_n-1}h(\mu,\sigma)\bigg|&<&\frac{n}{q}.\end{eqnarray} Now either $n-\gamma_{q_n}\geq
N(q_n+1)$ or $n -\gamma_{q_n}\leq N(q_n+1)$. In the former case we apply Lemma \ref{ q lemma} (iv) and note that
$n- \gamma_{q_n}\leq n$ to obtain
\begin{eqnarray} \label{ENT3}
\bigg| \log \mu_{k(q_n+1)}([\omega_{\gamma_{q_n}} \cdots \omega_{n}])+(n-\gamma_{q_n})h(\mu,\sigma)\bigg|&<& \frac{n}{q_n+1}.
\end{eqnarray}
In the latter case, by the construction of $(\gamma_q)_{q \in \N}$ we have $N(q_n+1) \leq \gamma_{q_n}/q_n \leq n/q_n$ and hence
\begin{eqnarray} \label{ENT4}
\bigg| \log \mu_{k(q_n+1)}([\omega_{\gamma_{q_n}} \cdots \omega_{n}])+(n-\gamma_{q_n})h(\mu,\sigma)\bigg|&<& \frac{n}{q_n}.
\end{eqnarray}
Thus, by (\ref{ENT1}), (\ref{ENT2}), (\ref{ENT3}), (\ref{ENT4}) together with the construction of $\W$ we have
\begin{eqnarray}
\bigg| \log \W([\omega_{1} \cdots \omega_{n}])+nh(\mu,\sigma)\bigg|&<& \frac{3n}{q_n}.
\end{eqnarray}
Dividing by $n$ and letting $n\rightarrow \infty$ proves the lemma.
\end{proof}

\begin{lemma}\label{R convergence}
For all $\omega \in S$ we have $\lim_{n\rightarrow \infty}\frac{R_n(\omega)}{n}=0$.
\end{lemma}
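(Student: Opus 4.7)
The plan is to mirror the block-decomposition strategy from the proofs of Lemma \ref{Birkhoff Limits} and Lemma \ref{EntropyLemma}. First I would fix $\omega\in S$ and, for each $q\in\N$, pick a witness $\tau^q\in S_q$ that agrees with $\omega$ on the positions $\gamma_{q-1}+1,\dots,\gamma_q$. Given large $n$, let $q_n$ be maximal with $\gamma_{q_n}\leq n$, so that $n\in[\gamma_{q_n},\gamma_{q_n+1})$ and $q_n\to\infty$ with $n$. Set $m:=n-\gamma_{q_n}$.

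My approach is to split into two cases according to whether $n$ sits well inside block $q_n+1$ or close to its right endpoint. In the interior case $\gamma_{q_n+1}-n\geq n/q_n$, I would locate the next occurrence of each digit using only block $q_n+1$, where $\omega$ and $\tau^{q_n+1}$ coincide. If $m\geq N(q_n+1)$, Lemma \ref{ q lemma}(ix) applied to $\tau^{q_n+1}$ gives $R_m(\tau^{q_n+1})<m/(q_n+1)\leq n/(q_n+1)$; whereas if $m<N(q_n+1)$, the weaker uniform bound in Lemma \ref{ q lemma}(iii) combined with the growth $\gamma_{q_n}\geq q_n B(q_n+1)N(q_n+1)$ yields $R_m(\tau^{q_n+1})<B(q_n+1)N(q_n+1)\leq n/q_n$. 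In either subcase the next-occurrence index is at most $m+n/q_n\leq \gamma_{q_n+1}-\gamma_{q_n}$, so it stays inside the block where $\omega=\tau^{q_n+1}$, giving $R_n(\omega)\leq n/q_n$.

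In the boundary case $\gamma_{q_n+1}-n<n/q_n$ the previous estimate may overshoot the block, so I would instead cross into block $q_n+2$, where $\omega$ agrees with $\tau^{q_n+2}$. Applying Lemma \ref{ q lemma}(ix) to $\tau^{q_n+2}$ at index $N(q_n+2)$ shows that every digit occurs in $\tau^{q_n+2}$ between positions $N(q_n+2)$ and $N(q_n+2)(1+1/(q_n+2))$. Translating back, every digit occurs in $\omega$ by position $\gamma_{q_n+1}+2N(q_n+2)$. The construction of $(\gamma_q)_q$ gives $N(q_n+2)\leq \gamma_{q_n+1}/(q_n+1)\leq n/q_n$, where the last inequality uses the case assumption $\gamma_{q_n+1}\leq n(q_n+1)/q_n$. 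Therefore $R_n(\omega)\leq(\gamma_{q_n+1}-n)+2N(q_n+2)\leq 3n/q_n$.

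Combining the two cases yields $R_n(\omega)/n\leq 3/q_n\to 0$ as $n\to\infty$, which is the claim. The main obstacle is the boundary case, which forces one to cross into the next block and relies crucially on the rapid-growth definition of $(\gamma_q)$ to keep $N(q_n+2)$ much smaller than $n$; the slightly awkward interior sub-case with $m<N(q_n+1)$ is handled by the same rapid-growth trick.
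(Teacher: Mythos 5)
Your proof is correct and follows the paper's intended strategy: the paper's own proof is a one-line citation to Lemma~\ref{ q lemma}~(iii) and (ix) ``in a similar way to the proof of Lemma~\ref{EntropyLemma},'' i.e.\ precisely the block-decomposition you carry out, using the witness $\tau^q$ agreeing with $\omega$ on positions $\gamma_{q-1}+1,\dots,\gamma_q$ and splitting on whether $n-\gamma_{q_n}$ has passed $N(q_n+1)$. The one genuine extra wrinkle you correctly identify and handle is the boundary case $\gamma_{q_n+1}-n<n/q_n$: unlike the entropy sums of Lemma~\ref{EntropyLemma}, the quantity $R_n(\omega)$ depends on the \emph{future} of $\omega$ and the next occurrences may escape into block $q_n+2$, so one must cross into that block and use the growth $N(q_n+2)\leq\gamma_{q_n+1}/(q_n+1)$; this is necessary and not spelled out in the paper's one-liner, and your treatment of it is sound.
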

\begin{proof}
This follows from Lemma \ref{ q lemma} (iii) and (ix) in a similar way to the proof of Lemma \ref{EntropyLemma}.
\end{proof}

\begin{lemma} \label{Symbolic Pointwise Dim W}
For all $\omega\in S$
\[\liminf_{n\rightarrow \infty} \frac{\log \W(B_n(\omega))}{\log \prod_{\nu=1}^n b_{i_{\nu}}}\geq D_{LY}(\mu).\]
\end{lemma}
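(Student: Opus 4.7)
The plan is to obtain an upper bound of the form
\[\log \W(B_n(\omega)) \leq -L_n(\omega)\, h(\mu,\sigma) - (n-L_n(\omega))\, h^v(\mu,\sigma) + o(n),\]
and then combine it with the asymptotics $\tfrac{1}{n}\log\prod_{\nu=1}^n b_{i_\nu}\to -\lambda^v(\mu,\sigma)$ from Lemma \ref{Birkhoff Limits}(ii) and $L_n(\omega)/n\to\lambda^v(\mu,\sigma)/\lambda(\mu,\sigma)$, which follows from (\ref{Lyapunov Ratios}) together with Lemma \ref{Birkhoff Limits}(i),(ii). A short algebraic manipulation then recovers exactly $D_{LY}(\mu) = \tfrac{h(\mu,\sigma)}{\lambda(\mu,\sigma)} + \bigl(\tfrac{1}{\lambda^v(\mu,\sigma)} - \tfrac{1}{\lambda(\mu,\sigma)}\bigr)h^v(\mu,\sigma)$.

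First, I would decompose $B_n(\omega)$ as a disjoint union of length-$n$ cylinders in which $\omega_1,\ldots,\omega_{L_n}$ are fixed and, beyond $L_n$, only the vertical coordinates $i_{L_n+1},\ldots,i_n$ are fixed. Thus $\W(B_n(\omega))$ is a sum of $\W$-values of these cylinders. The definition of $\W$ as a product of $\mu_{k(q)}$ across the blocks $[\gamma_{q-1}+1,\gamma_q]$, together with the fact that each $\mu_{k(q)}$ is $\sigma^{k(q)}$-Bernoulli, makes this sum factor across blocks: a block lying entirely inside $[1,L_n]$ contributes $\mu_{k(q)}([\omega_{\gamma_{q-1}+1}\cdots\omega_{\gamma_q}])$, whereas a block lying entirely inside $[L_n+1,n]$ contributes $\pi_*\mu_{k(q)}([i_{\gamma_{q-1}+1}\cdots i_{\gamma_q}])$ after the $j$-marginalisation.

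Summing the logarithms of contributions from blocks of the first kind exactly as in the proof of Lemma \ref{EntropyLemma}(i), and from the second kind exactly as in the proof of Lemma \ref{EntropyLemma}(ii), yields the two main terms $-L_n h(\mu,\sigma)+o(n)$ and $-(n-L_n)h^v(\mu,\sigma)+o(n)$ respectively. The two ``straddle'' blocks, namely the one containing $L_n$ and the one containing $n$, require a little more care. I would further decompose each of them using the $k(q)$-super-symbol Bernoulli structure of $\mu_{k(q)}$: complete $k(q)$-super-symbols lying on either side of $L_n$ fall into one of the two cases above, while the at-most-one $k(q)$-super-symbol truly crossing $L_n$ contributes only $\exp(O(k(q)B(q)))$ by Lemma \ref{ q lemma}(i),(ii). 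The analogous analysis applies at the position $n$.

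The main obstacle is the bookkeeping for these boundary terms. The rapid-growth definition $\gamma_q = q\gamma_{q-1}(\prod_{l=1}^{q+1}N(l))(\prod_{l=1}^{q+1}B(l))(\prod_{l=1}^{q+1}k(l)) + \gamma_{q-1}$ was, however, tailored precisely so that $k(q)B(q) = o(\gamma_q/q)$ and $\gamma_{q-1} = o(\gamma_q/q)$; this guarantees that both the straddle contributions and the partial final block are $o(n)$, exactly as the analogous estimates were handled in the proof of Lemma \ref{EntropyLemma}.
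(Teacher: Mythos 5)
Your proposal is correct and follows essentially the paper's own strategy: exploit the factorization of $\W$ across the $k(q_n)$-super-symbol boundaries near $L_n(\omega)$, feed the two bulk pieces into Lemma \ref{EntropyLemma}(i) and (ii), and absorb the boundary contributions (of size $O(k(q_n))=o(n)$ thanks to the rapid growth of $\gamma_q$) into the error term before taking the limit using (\ref{L}) and Lemma \ref{Birkhoff Limits}(ii). The only difference is cosmetic: you decompose $\W(B_n(\omega))$ directly into bulk, straddle, and tail factors, whereas the paper rounds $L_n(\omega)$ to a nearby super-symbol-aligned position $L^{\pm}_n(\omega)$ and compares $B_n(\omega)$ with the cylinder set that factorises cleanly there, after which the same entropy estimates finish the job.
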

\begin{proof}
Fix $\omega=(i_{\nu},j_{\nu})_{\nu \in \N} \in S$. By (\ref{Lyapunov Ratios}) we have
\begin{equation}
\lim_{n \rightarrow \infty} \frac{1}{n} \log \prod_{\nu=1}^{L_n(\omega)} a_{\omega_{\nu}}= \lim_{n \rightarrow \infty} \frac{1}{n} \log \prod_{\nu=1}^n b_{i_{\nu}}.
\end{equation}
Hence, by Lemma \ref{Birkhoff Limits} (i) and (ii) we have
\begin{equation}\label{L}
\lim_{n \rightarrow \infty} \frac{L_n(\omega)}{n} = \frac{\lambda^v(\mu,\sigma)}{\lambda(\mu,\sigma)}
\end{equation}
Given $n\in\N$ let $q_n$ be the greatest integer satisfying $\gamma_{q_n-1}<L_n(\omega)$ and let $L^+_n(\omega):=\min\left\lbrace l \geq L_n(\omega): k(q_n)|(l-\gamma_{q_n-1})\right\rbrace$. By the construction of $(\gamma_q)_{q \in \N}$ we have $k(q_n)\leq \gamma_{q_n-1}/q_n\leq L_n(\omega)/q_n\leq  n/q_n$ and so by (\ref{L})
\begin{equation}\label{L+}
\lim_{n \rightarrow \infty} \frac{L^+_n(\omega)}{n} =
\lim_{n \rightarrow \infty} \frac{L_n(\omega) + o(k(q_n))}{n} =
\lim_{n \rightarrow \infty} \frac{L_n(\omega)}{n} = \frac{\lambda^v(\mu,\sigma)}{\lambda(\mu,\sigma)}.
\end{equation}
Moreover, $L^+_n(\omega) \geq L_n(\omega)$ so
\begin{equation}
B_n(\omega) \subseteq [\omega_1\cdots\omega_{L^+_n(\omega)}]\cap \sigma^{-L^+_n(\omega)}[i_{L^+_n(\omega)}\cdots i_n].
\end{equation}
Thus, by Lemma \ref{Birkhoff Limits} (ii) it suffices to show that
\begin{equation}
\lim_{n\rightarrow \infty} \frac{1}{n}\W([\omega_1\cdots\omega_{L^+_n(\omega)}]\cap \sigma^{-L^+_n(\omega)}[i_{L^+_n(\omega)}\cdots i_n])= \frac{\lambda^v(\mu,\sigma)}{\lambda(\mu,\sigma)}h(\mu,\sigma) +\left( 1-\frac{\lambda^v(\mu,\sigma)}{\lambda(\mu,\sigma)}\right)h^v(\mu,\sigma).
\end{equation}
Now since $L^+_n(\omega) -\gamma_{q_n-1}$ is a multiple of $k(q_n)$ it follows from the construction of $\W$ that for all $\tau=(\tau_{\nu})_{\nu=1}^n\in \D^n$ we have
\begin{equation}\label{independence}
\W([\tau_1 \cdots \tau_n])=\W([\tau_1\cdots \tau_{L^+_n(\omega)}])\W([ \tau_{L^+_n(\omega)+1}\cdots \tau_n]).
\end{equation}
Hence, it suffices to show that
\begin{eqnarray}
\label{Entlim1}\lim_{n\rightarrow \infty}\frac{1}{n} \W([\omega_1\cdots\omega_{L^+_n(\omega)}])&=& \frac{\lambda^v(\mu,\sigma)}{\lambda(\mu,\sigma)}h(\mu,\sigma)\\
\label{Entlim2} \lim_{n\rightarrow \infty} \frac{1}{n} \W(\sigma^{-L^+_n(\omega)}[i_{L^+_n(\omega)}\cdots i_n])&=& \left( 1-\frac{\lambda^v(\mu,\sigma)}{\lambda(\mu,\sigma)}\right)h^v(\mu,\sigma).
\end{eqnarray}
Equation (\ref{Entlim1}) follows from Lemma \ref{EntropyLemma} (i) combined with (\ref{L+}). Equation
(\ref{Entlim1}) follows from Lemma \ref{EntropyLemma} (ii) and (\ref{L+}) along with
\begin{equation}
\W([i_1 \cdots i_n])=\W([i_1\cdots i_{L^+_n(\omega)}])\W([i_{L^+_n(\omega)+1}\cdots i_n])
\end{equation}
which follows from (\ref{independence}).
\end{proof}
\begin{lemma}\label{LB}$\dim(\Pi(\Sigma_{\alpha}^{\varphi})) \geq D_{LY}(\mu)$.
\end{lemma}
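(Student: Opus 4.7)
The plan is simply to harvest all the machinery already assembled in this section and feed it into Lemma \ref{SymbolicDimensionLemmas} (i). We have constructed a Borel probability measure $\W$ on $\Sigma$ together with a measurable set $S \subseteq \Sigma$ with three key properties: (a) $\W(S) > 0$ by Lemma \ref{S>0}; (b) $\lim_{n\to\infty} R_n(\omega)/n = 0$ for every $\omega \in S$ by Lemma \ref{R convergence}; and (c) the symbolic lower pointwise dimension bound $\liminf_{n\to\infty} \log \W(B_n(\omega))/\log \prod_{\nu=1}^n b_{i_\nu} \geq D_{LY}(\mu)$ for every $\omega \in S$ by Lemma \ref{Symbolic Pointwise Dim W}. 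These are precisely the hypotheses of Lemma \ref{SymbolicDimensionLemmas} (i) with measure $\W$ and exponent $d = D_{LY}(\mu)$, and that lemma therefore yields $\dim \Pi(S) \geq D_{LY}(\mu)$.

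To finish, I would invoke Lemma \ref{SBirkhoff}, which tells us that $S \subseteq \Sigma_{\alpha}^{\varphi}$. Applying the projection $\Pi$ gives $\Pi(S) \subseteq \Pi(\Sigma_{\alpha}^{\varphi})$, and monotonicity of Hausdorff dimension under inclusion then yields
\begin{equation*}
\dim \Pi(\Sigma_{\alpha}^{\varphi}) \;\geq\; \dim \Pi(S) \;\geq\; D_{LY}(\mu),
\end{equation*}
which is the claim.

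The only wrinkle is that Lemma \ref{SymbolicDimensionLemmas} (i) requires $\D$ to be two-dimensional, so strictly speaking the argument above covers that case. When $\D$ fails to be two-dimensional, either all symbols share a common $i$-coordinate or every $i$ admits only one $j$; in both cases the system degenerates to a conformal (self-similar) iterated function system on the line or a graph-directed variant of one, $D_{LY}(\mu)$ reduces to the classical $h(\mu,\sigma)/\lambda(\mu,\sigma)$, and the lower bound follows from the standard conformal multifractal formalism (e.g.\ the cited results of Pesin--Weiss or the self-similar case of Olsen). I expect no genuine obstacle in this proof: all the technical work has been front-loaded into the construction of $\W$ and the verification of its asymptotics on $S$, and the present lemma is essentially a bookkeeping step that assembles those pieces.
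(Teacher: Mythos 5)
Your argument for the two-dimensional case is exactly the paper's: combine Lemmas \ref{S>0}, \ref{R convergence}, and \ref{Symbolic Pointwise Dim W} to feed into Lemma \ref{SymbolicDimensionLemmas}(i), then use Lemma \ref{SBirkhoff} and monotonicity of dimension. The paper also reduces the degenerate (non-two-dimensional) cases to a one-dimensional conformal result of Olsen, as you suggest; the only slip in your sketch is that when each $i$ admits a single $j$ the formula collapses to $h^v(\mu,\sigma)/\lambda^v(\mu,\sigma)$ rather than $h(\mu,\sigma)/\lambda(\mu,\sigma)$, and one projects vertically rather than horizontally, but this is cosmetic.
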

\begin{proof}
We begin with the special cases in which $\D$ is not two dimensional. If $\D$ is not two dimensional then either
there is just one $i$ for which there exists $j$ with $(i,j)\in\D$, or for each $i$ there is just one $j$ for
which $(i,j) \in \D$. In the former case we have $h^v(\mu,\sigma)=0$. Thus it suffices to show that
\begin{equation}
\dim(\Pi(\Sigma_{\alpha}^{\varphi})) \geq \frac{h(\mu,\sigma)}{\lambda(\mu,\sigma)}.
\end{equation}
Moreover this follows from Olsen \cite{Olsen Multifractal 1} Theorem 1 applied to the projection
$\chi^h(\Lambda_{\alpha}^{\varphi})$ onto the horizontal axis, together with the fact that the Hausdorff dimension
cannot increase under projection. Similarly when there is just one $j$ for each $i$ we have
$h(\mu,\sigma)=h^v(\mu,\sigma)$ and so it suffices to show
\begin{equation}
\dim(\Pi(\Sigma_{\alpha}^{\varphi})) \geq \frac{h^v(\mu,\sigma)}{\lambda^v(\mu,\sigma)}
\end{equation}
which follows from Olsen \cite{Olsen Multifractal 1} Theorem 1 applied to the projection
$\chi^v(\Lambda_{\alpha}^{\varphi})$ onto the vertical. Henceforth we assume that $\D$ is two dimensional.
\newline
By Lemma \ref{SBirkhoff} it suffices to prove that $\dim(\Pi(S)) \geq D_{LY}(\mu)$. Now by Lemma \ref{S>0} we have
$\W(S)>0$ and by Lemma \ref{Symbolic Pointwise Dim W} we have
\begin{equation}
\liminf_{n\rightarrow \infty} \frac{\log \W(B_n(\omega))}{\log \prod_{\nu=1}^n b_{i_{\nu}}}\geq D_{LY}(\mu)
\end{equation}
for all $\omega \in S$. Moreover, by Lemma \ref{R convergence} for all $\omega \in S$
\begin{equation}
\lim_{n\rightarrow\infty}\frac{R_n(\omega)}{n}=0.
\end{equation}
Thus, by Lemma \ref{SymbolicDimensionLemmas} (i), combined with the assumption that $\D$ is two dimensional, the
lemma holds.
\end{proof}
Lemma \ref{LB} holds for all $\mu \in \M_{\sigma}(\Sigma)$ satisfying $\int \varphi d\mu=\alpha$. Therefore,
\begin{equation}
\dim{\Lambda^{\varphi}_{\alpha}}\geq \sup \left\lbrace D_{LY}(\mu): \mu \in \M_{\sigma}(\Sigma), \int \varphi d\mu =\alpha \right\rbrace.
\end{equation}

\section{Proof of the upper bound}\label{Proof of the upper bound}
We begin by demonstrating that the function $f:[\alpha_{\min}(\varphi), \alpha_{\max}(\varphi)] \rightarrow \R$ given by
\begin{equation}
f(\alpha)=\sup \left\lbrace D_{LY}(\mu): \mu \in \M_{\sigma}(\Sigma), \int \varphi d\mu =\alpha \right\rbrace
\end{equation}
is continuous on $[\alpha_{\min}(\varphi), \alpha_{\max}(\varphi)]$.

\begin{lemma}\label{continuity}
$f$ is continuous on $[\alpha_{\min}(\varphi), \alpha_{\max}(\varphi)] $.
\end{lemma}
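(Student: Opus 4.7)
The plan is to establish upper and lower semi-continuity of $f$ separately, using two ingredients: that $D_{LY}$ is itself weak-$*$ upper semi-continuous on $\M_{\sigma}(\Sigma)$, and that $D_{LY}$ varies continuously along convex combinations $(1-\epsilon)\mu+\epsilon\nu$ as $\epsilon\to 0$.

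For the first ingredient, I would observe that $\mu\mapsto\lambda(\mu,\sigma)$ and $\mu\mapsto\lambda^v(\mu,\sigma)$ are weak-$*$ continuous (as integrals of bounded continuous functions) and are uniformly bounded below by $-\log\max_{(i,j)\in\D}a_{ij}>0$ and $-\log\max_i b_i>0$ respectively. The entropies $h(\cdot,\sigma)$ and $h^v(\cdot,\sigma)$ are upper semi-continuous thanks to expansiveness of the full shift together with continuity of the factor map $\pi$. Since the hypothesis $a_{ij}\leq b_i$ forces $\lambda\geq\lambda^v$, the coefficient $\tfrac{1}{\lambda^v}-\tfrac{1}{\lambda}$ is non-negative, so $D_{LY}$ is a sum of u.s.c.\ non-negative functions multiplied by continuous non-negative coefficients, and is therefore u.s.c. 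Upper semi-continuity of $f$ now follows by a standard weak-$*$ compactness argument: given $\alpha_n\to\alpha$, pick $\mu_n$ with $\int\varphi\,d\mu_n=\alpha_n$ and $D_{LY}(\mu_n)\geq f(\alpha_n)-1/n$, extract a weak-$*$ limit $\mu$, note that $\int\varphi\,d\mu=\alpha$ by continuity of integration against $\varphi$, and conclude $\limsup f(\alpha_n)\leq D_{LY}(\mu)\leq f(\alpha)$.

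For lower semi-continuity, fix $\alpha\in[\alpha_{\min}(\varphi),\alpha_{\max}(\varphi)]$ and a competitor $\mu\in\M_{\sigma}(\Sigma)$ with $\int\varphi\,d\mu=\alpha$. Given $\alpha'$ in the interval with $\alpha'\neq\alpha$, choose $\nu\in\M_{\sigma}(\Sigma)$ with $\int\varphi\,d\nu$ strictly on the opposite side of $\alpha$ from $\alpha'$; such a $\nu$ exists by the very definitions of $\alpha_{\min}$ and $\alpha_{\max}$ (at an endpoint only one side is needed, and any $\nu$ with $\int\varphi\,d\nu$ in the interior suffices). Set $\epsilon:=(\alpha'-\alpha)/(\int\varphi\,d\nu-\alpha)\in(0,1]$ and $\mu':=(1-\epsilon)\mu+\epsilon\nu$, so that $\int\varphi\,d\mu'=\alpha'$ and $\epsilon\to 0$ as $\alpha'\to\alpha$. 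Because $h$ and $h^v$ are affine on $\M_{\sigma}(\Sigma)$ and $\lambda,\lambda^v$ are linear in $\mu$ and stay bounded away from $0$, each of the four quantities $h(\mu'),h^v(\mu'),\lambda(\mu'),\lambda^v(\mu')$ is a continuous function of $\epsilon$, and hence so is $D_{LY}(\mu')$. Letting $\alpha'\to\alpha$ yields $\liminf_{\alpha'\to\alpha}f(\alpha')\geq D_{LY}(\mu)$, and taking the supremum over admissible $\mu$ gives $\liminf_{\alpha'\to\alpha}f(\alpha')\geq f(\alpha)$.

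The main obstacle is essentially bookkeeping around the ratio structure of $D_{LY}$: one must check that $\lambda$ and $\lambda^v$ are uniformly bounded away from zero so that small convex perturbations produce continuous behaviour in $D_{LY}$ (automatic from the finiteness of $\D$), and that the coefficient of $h^v$ remains non-negative so that upper semi-continuity survives summation. Once these are in place, weak-$*$ compactness of $\M_\sigma(\Sigma)$, upper semi-continuity of entropy, and affinity of the numerator terms under convex combinations together deliver both inequalities and hence the continuity asserted in the lemma.
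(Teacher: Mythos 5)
Your argument follows the paper's proof essentially verbatim: upper semi-continuity via weak-$*$ compactness and upper semi-continuity of the entropy map (the extra detail you supply about $\lambda,\lambda^v$ being weak-$*$ continuous, uniformly bounded away from $0$, and satisfying $\lambda\geq\lambda^v$ is correct and makes the claim that $D_{LY}$ is u.s.c.\ fully explicit, where the paper simply cites u.s.c.\ of entropy), and lower semi-continuity by sliding along a segment of invariant measures and invoking affinity of the entropy map. One slip in the lower-semicontinuity step: you ask for $\nu$ with $\int\varphi\,d\nu$ on the \emph{opposite} side of $\alpha$ from $\alpha'$, but your own formula $\epsilon=(\alpha'-\alpha)/(\int\varphi\,d\nu-\alpha)$ together with the requirement $\epsilon\in(0,1]$ forces $\int\varphi\,d\nu$ to lie on the \emph{same} side of $\alpha$ as $\alpha'$, and at least as far away; the paper realises exactly this by taking $\nu$ to be a measure attaining $\alpha_{\max}(\varphi)$ (respectively $\alpha_{\min}(\varphi)$) for the right-sided (respectively left-sided) limit. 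With that wording corrected the proof is sound and matches the paper's route.
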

\begin{proof}
Fix $\alpha_* \in [\alpha_{\min}(\varphi), \alpha_{\max}(\varphi)]$. First we show that $f$ is upper semi-continuous at $\alpha^*$. Since $[\alpha_{\min}(\varphi), \alpha_{\max}(\varphi)]$ $=\left\lbrace \alpha \in \R: \Sigma_{\alpha}^{\varphi}\neq \emptyset \right\rbrace$ we may take a sequence $\{\alpha_n\}_{n\in\N}$ such that $\lim_{n\rightarrow \infty}\alpha_n= \alpha$, $\lim_{n\rightarrow \infty}f(\alpha_n)=\limsup_{\alpha\rightarrow \alpha_*} f(\alpha)$ and for each $n$ there is a measure $\mu_n$ such that $\int \varphi d \mu_n=\alpha_n$ and $D_{LY}(\mu_n)>f(\alpha_n)-1/n$. Since $\M_{\sigma}(\Sigma)$ is compact we may take a weak $*$ limit $\mu_*$ of $\{\mu_n\}_{n\in\N}$. It follows from the upper semi-continuity of entropy (see \cite{Walters} Theorem 8.2) that $\mu \mapsto D_{LY}(\mu)$ is upper semi-continuous. Thus, $f(\alpha_*)\geq D_{LY}(\mu_*)\geq \limsup_{n \rightarrow \infty}D_{LY}(\mu_n)\geq \limsup_{\alpha\rightarrow \alpha_*} f(\alpha)$. Hence, $f$ is upper-semicontinuous at $\alpha^*$ for all $\alpha_* \in [\alpha_{\min}(\varphi), \alpha_{\max}(\varphi)]$.\newline
To prove that $f$ is lower semi-continuous we first show that, provided $\alpha_* \neq \alpha_{\max}(\varphi)$, $\liminf^{\alpha>\alpha_*}_{\alpha\rightarrow \infty} f(\alpha)> f(\alpha_*)-\epsilon$ for an arbitrary $\epsilon>0$. Choose $\mu_{\epsilon}\in \M_{\sigma}(\Sigma)$ with $\int \varphi d \mu_{\epsilon}= \alpha_*$ and $D_{LY}(\mu_{\epsilon})>f(\alpha_*)-\epsilon$. Take $\mu_{\max} \in \M_{\sigma}(\Sigma)$ with $\int \varphi d \mu_{\max}= \alpha_{\max}$. Now for each $\rho\in(0,1)$ we let $\mu_{\rho, \epsilon}:=\rho \mu_{\epsilon}+(1-\rho)\mu_{\max}$. Note that $\int f d\mu_{\rho,\epsilon}=\rho\alpha_{*}+(1-\rho)\alpha_{\max}$. Moreover, it follows from the fact that the entropy map is affine (see \cite{Walters} Theorem 8.1) that $\rho\mapsto D_{LY}(\mu_{\rho,\epsilon})$ is continuous.  Hence $f(\alpha_*)-\epsilon< D_{LY}(\mu_*)$ $\leq \liminf_{\rho\rightarrow 1} D_{LY}(\mu_{\rho,\epsilon})$ $ \leq \liminf_{\rho\rightarrow 1} f(\rho\alpha_*+(1-\rho)\alpha_+)=  \liminf^{\alpha>\alpha_*}_{\alpha\rightarrow \infty} f(\alpha)$. The proof that $\liminf^{\alpha<\alpha_*}_{\alpha\rightarrow \infty} f(\alpha)\geq f(\alpha_*)$ for all $\alpha_* \neq \alpha_{\min}(\varphi)$ is similar. Thus, $f$ is lower semi-continuous at $\alpha^*$ for all $\alpha_* \in [\alpha_{\min}(\varphi), \alpha_{\max}(\varphi)]$.
\end{proof}
Thus, to show that the spectrum is continuous it suffices to identify the prove that $\dim
\Lambda_{\alpha}^{\varphi} = f(\alpha)$.\newline Another consequence of Lemma \ref{continuity} is that in proving
the upper bound in Theorem \ref{main}, $\dim \Lambda_{\alpha}^{\varphi}\leq f(\alpha)$, it suffices to prove
\begin{equation}\label{e estimate}
\dim \Lambda^{\varphi}_{\alpha} \leq \sup \left\lbrace D_{LY}(\mu): \mu \in \M_{\sigma}(\Sigma), \big|\int \varphi d\mu -\alpha\big|\leq \epsilon \right\rbrace
\end{equation} for arbitrarily small $\epsilon>0$.\newline
The key lemma in the proof of the upper bound is Lemma \ref{Key Lemma for the Upper Bound}, which uses an idea
from Bara\'{n}ski \cite{Baranski} to give an upper estimate for the dimension of the projection of a subset of the
symbolic space in terms the possible limit points for frequencies of words amongst its members. From Lemma
\ref{Key Lemma for the Upper Bound} we can deduce an estimate of the form (\ref{e estimate}) with an error term
given by the variance of a potential across sets of strings with a common first digit $\var_1(\varphi)$ (see Lemma
\ref{upper estimate with var 1 error}). By iterating our system some large number of times we are able to
transform this estimate into estimates of the form (\ref{e estimate}) with an arbitrary degree of precision (see
Lemma \ref{upper estimate with var k error}).\newline We introduce the following terminology for the proof of the
upper bound. Define
\begin{equation}
\Prob:=\left\lbrace (p_d)_{d\in\D}\in [0,1]^{\D}: \sum_{d\in\D}p_d=1\right\rbrace
\end{equation}
 be the simplex of probability vectors on the digit set $\D$ and
 \begin{equation}
 \mathbb{B}:=\left\lbrace (p_d)_{d\in\D}\in \Prob: p_d \in \Q \backslash \{0\}\right\rbrace.
\end{equation}
Note that $\Prob$ is compact and $\mathbb{B}$ is a countable dense subset. For each $\p\in \Prob$ we let $\mu_{\p}$ denote the corresponding Bernoulli measure on $\Sigma$. Given $(i,j)\in\D$ we define, for each $\omega \in \Sigma$ and $n\in \N$
\begin{equation}
N_{ij}(\omega|n):= \#\{l\in \{1, \cdots,n\}:\omega_l=(i,j)\}
\end{equation}
and $P_{ij}(\omega|n):=N_{ij}(\omega|n)/n$. This implies that for each $\omega\in\Sigma$ and $n\in\N$ we have a probability vector $\mathbf{P}(\omega|n):=(P_{ij}(\omega|n))_{(i,j)\in\D}\in\Prob$ known as the nth level frequency vector for $\omega$. We also let $N_i(\omega|n):=\sum_{j=1}^{M_i}N_{ij}(\omega|n)$, $P_i(\omega|n):=\sum_{j=1}^{M_i}P_{ij}(\omega|n)$ and $p_i:=\sum_{j=1}^{M_i}p_{ij}$ for $(p_{ij})_{(i,j)\in\D}\in\Prob$.
\begin{lemma}\label{Key Lemma for the Upper Bound}
Suppose we have $\Omega \subseteq \Sigma$ and $A \subseteq \Prob$ such that for all $\omega \in \Omega$ every limit point of the sequence $(\mathbf{P}(\omega|n))_{n\in\N}$ of frequency vectors for $\omega$ lies within $A$. Then $\dim{\Pi(\Omega)}\leq \sup \left\lbrace D_{LY}(\mu_p) : p \in A \right\rbrace$.
\end{lemma}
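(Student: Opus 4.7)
The plan is to bound $\dim\Pi(\Omega)$ from above via an explicit cover-and-count argument using approximate squares and Stirling's formula applied to frequency vectors, partitioning $\Omega$ according to approximate limits of $\mathbf{P}(\omega|n)$ at both scales $n$ and $L_n(\omega)$.

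First I would reduce to a compact, finite setting. Since $p\mapsto D_{LY}(\mu_p)$ is a continuous function of the Bernoulli weights on the compact simplex $\Prob$, replacing $A$ by its closure does not change $\sup_{p\in A}D_{LY}(\mu_p)$; henceforth assume $A$ is compact. Fix $s>\sup_{p\in A}D_{LY}(\mu_p)$ and a small $\epsilon>0$, and let $\mathcal{F}\subseteq A$ be a finite $\epsilon$-net. For each $N\in\N$ and $(p,q)\in\mathcal{F}^2$, define
\[
\Omega_{N,p,q}:=\{\omega\in\Omega:d(\mathbf{P}(\omega|m),A)<\epsilon \text{ for all } m\geq N,\text{ and } \|\mathbf{P}(\omega|n)-p\|,\|\mathbf{P}(\omega|L_n)-q\|<2\epsilon \text{ for infinitely many } n\geq N\}.
\]
Since every limit point of $(\mathbf{P}(\omega|n))_n$ lies in the compact set $A$ and $L_n(\omega)\to\infty$, we have $\Omega=\bigcup_{N,(p,q)}\Omega_{N,p,q}$; by countable stability of Hausdorff dimension it suffices to show $\dim\Pi(\Omega_{N,p,q})\leq s$ for each triple.

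For the core estimate, I cover $\Pi(\Omega_{N,p,q})$ by the approximate squares $\Delta_n(\omega)$ at the scales $n$ in the defining subsequence. Each such $\Delta_n(\omega)$ is determined by the cylinder $[\omega_1\cdots\omega_{L_n}]$ (whose $L_n$-level frequency is within $2\epsilon$ of $q$) and the $i$-string $i_{L_n+1}\cdots i_n$ (whose $i$-frequency $r$ is forced by the identity $p_i=(L_n/n)q_i+(1-L_n/n)r_i$). By Stirling, the number of such approximate squares at scale $n$ is at most $\mathrm{poly}(n)\exp(L_n H(q)+(n-L_n)H^v(r))$, where $H(q)=-\sum q_{ij}\log q_{ij}$ and $H^v(r)=-\sum r_i\log r_i$. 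Each $\Delta_n(\omega)$ has diameter at most a constant times $\prod_{\nu=1}^n b_{i_\nu}\leq \exp(-n\lambda^v(\mu_p)+O(n\epsilon))$, while (\ref{Lyapunov Ratios}) together with the frequency approximations forces $L_n/n=\lambda^v(\mu_p)/\lambda(\mu_q)+O(\epsilon)$. Combining these ingredients, the total $s$-Hausdorff sum over the chosen cover is bounded by
\[
\sum_{n\geq n_0}\mathrm{poly}(n)\exp\!\bigl(n\lambda^v(\mu_p)(s^*(p,q)-s+O(\epsilon))\bigr),\qquad s^*(p,q):=\frac{H(q)}{\lambda(\mu_q)}+\Bigl(\frac{1}{\lambda^v(\mu_p)}-\frac{1}{\lambda(\mu_q)}\Bigr)H^v(r).
\]

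The main obstacle is verifying $s^*(p,q)\leq\sup_{p'\in A}D_{LY}(\mu_{p'})+O(\epsilon)$ for every $(p,q)\in\mathcal{F}^2$. In the balanced case $p=q$ this is immediate: the constraint forces $r_i=p_i$ and $s^*(p,p)=D_{LY}(\mu_p)$ exactly. In the unbalanced case I would invoke concavity of the Shannon entropy $H^v$ applied to $p_i=(L_n/n)q_i+(1-L_n/n)r_i$, namely $(L_n/n)H^v(q)+(1-L_n/n)H^v(r)\leq H^v(p)$, together with an algebraic rearrangement of the formula for $s^*(p,q)$, to bound $s^*(p,q)$ by a suitable convex combination of $D_{LY}(\mu_p)$ and $D_{LY}(\mu_q)$, each of which lies in $\{D_{LY}(\mu_{p'}):p'\in A\}$. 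Granting this, the displayed $s$-sum converges, giving $\dim\Pi(\Omega_{N,p,q})\leq s$. Taking the countable union over the triples and then $s\downarrow\sup_{p\in A}D_{LY}(\mu_p)$ completes the proof.
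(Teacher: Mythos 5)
There is a genuine gap at exactly the step you flag as ``the main obstacle'' and then defer: the inequality $s^*(p,q)\leq\sup_{p'\in A}D_{LY}(\mu_{p'})$ is not merely unproved, it is false as stated. The concavity argument gives $(L_n/n)H^v(q)+(1-L_n/n)H^v(r)\leq H^v(p)$, but feeding that into your formula for $s^*(p,q)$ yields
\[
s^*(p,q)\;\leq\;\frac{H(q)-H^v(q)}{\lambda(\mu_q)}+\frac{H^v(p)}{\lambda^v(\mu_p)},
\]
and this ``mixed'' quantity need not be dominated by $D_{LY}(\mu_p)$ or $D_{LY}(\mu_q)$ (or any convex combination thereof), because the conditional-entropy term comes from $q$ while the vertical-entropy term comes from $p$. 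A concrete check: take the Bedford--McMullen grid $\D=\{(1,1),(1,2),(1,3),(2,1)\}$ with all $a_{ij}=1/3$, $b_i=1/2$, so $t=L_n/n\equiv\log 2/\log 3$. Choose $q=(1/4,1/4,1/4,1/4)$ (maximizing $H(q)$) and $r=(1/2,1/2)$ (maximizing $H^v(r)$); then $p_i=tq_i+(1-t)r_i$ forces $p_1\approx 0.658$, and a short computation gives $s^*(p,q)\approx 1.63$, whereas $\dim\Lambda\approx 1.585$ and both $D_{LY}(\mu_p),D_{LY}(\mu_q)<1.585$. So your cover-and-count bound can exceed even the dimension of the whole attractor, hence cannot be reconciled with $\sup_A D_{LY}$ by any algebraic massaging. (The lemma is not falsified: for a genuine $\omega$ the oscillating frequencies trace out a connected arc in $\Prob$, so a two-point $A=\{p,q\}$ with $p\neq q$ corresponds to $\Omega=\emptyset$; but your decomposition and counting argument as written do not exploit this.)

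The missing idea is the paper's choice of the scale subsequence $n_q$. Rather than picking arbitrary scales where the two frequencies stabilize, the paper selects $n_q$ so that the vertical ratio $\sum_i P_i(\omega|n_q)\log P_i(\omega|n_q)/\sum_i P_i(\omega|n_q)\log b_i$ converges to its $\liminf$ over all $n$. Since $L_{n_q}\to\infty$, the same ratio evaluated at $L_{n_q}$ is then $\geq$ its value at $n_q$ in the limit, which lets one replace the scale-$n_q$ vertical term by the scale-$L_{n_q}$ one. After that replacement every frequency in the final expression is evaluated at $L_{n_q}$, and $\mathbf{P}(\omega|L_{n_q})\to\mathbf{P}\in A$, so the limit is exactly $D_{LY}(\mu_{\mathbf{P}})\leq s$; the gap frequency $r$ never enters. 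The paper also implements the covering bound via the auxiliary Bernoulli measure $\mu_{\rho(n)}$ (with $\rho_{ij}(n)=P_i(\omega|n)P_{ij}(\omega|L_n)/P_i(\omega|L_n)$), rationalized to a countable family $\mathbb{B}$, and then Lemma~\ref{SymbolicDimensionLemmas}(ii), rather than Stirling counting, but that is a cosmetic difference; the substantive difference, and the one your proposal lacks, is the $\liminf$-minimizing choice of $n_q$.
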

\begin{proof}
Recall that $D_{LY}(\mu):= \frac{h(\mu,\sigma)}{\lambda(\mu,\sigma)}
+\bigg(\frac{1}{\lambda^v(\mu,\sigma)}-\frac{1}{\lambda(\mu,\sigma)}\bigg) h^v(\mu,\sigma)$, so for Bernoulli
measures $\mu_{\p}$ we can write
\begin{eqnarray*}
D_{LY}(\mu_{\p})&=&\frac{\sum_{(i,j) \in \D} p_{ij} \log p_{ij}}{\sum_{(i,j) \in \D} p_{ij} \log a_{ij}}+\frac{\sum_{i=1}^N p_{i} \log p_{i}}{\sum_{i=1}^N p_{i}\log b_{i}} - \frac{\sum_{i=1}^N p_{i} \log p_{i}}{\sum_{(i,j) \in \D} p_{ij} \log a_{ij}}\\
&=& \frac{\sum_{(i,j) \in \D} p_{ij} \log p_{ij}/p_{i} }{\sum_{(i,j) \in \D} p_{ij} \log
a_{ij}}+\frac{\sum_{i=1}^N p_{i} \log p_{i}}{\sum_{i=1}^N p_{i}\log b_{i}}.
\end{eqnarray*}
Let $s:= \sup\{D_{LY}(\mu_p): p \in A\}$. Fix some $\delta>0$ and $\omega \in \Omega$. For each $n\in \N$ we take $\mathbf{\rho}(n)=(\rho_{ij}(n))_{(i,j)\in\D} \in \Prob$ defined
by
\begin{eqnarray}\label{rho def}
\rho_{ij}(n):=\begin{cases} P_{i}(\omega|n) \frac{
P_{ij}(\omega|L_n(\omega))}{P_{i}(\omega|L_n(\omega))} \text{  if  }P_{ij}(\omega|L_n(\omega))\neq 0\\0\text{ if  }P_{ij}(\omega|L_n(\omega))=0\end{cases}.
\end{eqnarray}
Since $\Prob$ is compact we may take $n_q$ such that:
\begin{enumerate}
\vspace{2mm} \item [(i)] $\lim_{q \rightarrow \infty} \frac{\sum_{i=1}^{N} P_{i}(\omega|n_q) \log
P_{i}(\omega|n_q) }{\sum_{i=1}^{N} P_{i}(\omega|n_q) \log  b_{i}}$ $ = \liminf_{n \rightarrow \infty}
\frac{\sum_{i=1}^{N} P_{i}(\omega|n) \log  P_{i}(\omega|n) }{\sum_{i=1}^{N} P_{i}(\omega|n) \log  b_{i}}$;
\vspace{2mm} \item [(ii)]$\mathbf{\rho}:=\lim_{q \rightarrow \infty} \rho(n_q)$ exists; \vspace{2mm} \item [(iii)]
$\mathbf{P}:=\lim_{q \rightarrow \infty} \mathbf{P}(\omega|L_{n_q}(\omega))$ exists.
 \end{enumerate}
\vspace{2mm}
Since $\omega \in \Omega$, $\mathbf{P} \in A$. Letting $b_{\max}:=\max_i{b_i}<1$ we may take $\mathbf{\beta}(\omega)=({\beta^{\omega}}_{ij})_{(i,j)\in \D} \in \mathbb{B}$ such that ${\rho}_{ij}/\beta^{\omega}_{ij}, \beta^{\omega}_{ij}/{\rho}_{ij} <
b_{\max}^{-\delta/2}$ for all $(i,j) \in \D$ with $\rho_{ij} \neq 0$. By the definition of $B_{n_q}(\omega)$ we have
\begin{equation} \mu_{\mathbf{\beta({\omega})}}(B_{n_q}(\omega))= \prod_{\nu=1}^{L_{n_q}(\omega)}\beta^{\omega}_{i_{\nu}j_{\nu}} \times \prod_{\nu=L_{n_q}(\omega)+1}^{n_q}\beta^{\omega}_{i_{\nu}}
\end{equation}
and so
\begin{eqnarray*}
\log\mu_{\mathbf{\beta^{(\omega)}}}(B_{n_q}(\omega))&=& \sum_{(i,j) \in \D} N_{ij}(\omega|L_{n_q}(\omega)) \log \beta^{\omega}_{ij} +  \sum_{i=1}^N(N_{i}(\omega|n_q)-N_{i}(\omega|L_{n_q}(\omega))) \log \beta^{\omega}_{i}\\
 &=& \sum_{(i,j) \in \D} N_{ij}(\omega|L_{n_q}(\omega)) \log \beta^{\omega}_{ij}/\beta^{\omega}_{i} +  \sum_{i=1}^N N_{i}(\omega|n_q) \log \beta^{\omega}_{i}.\\
\end{eqnarray*}

By (\ref{Lyapunov Ratios}) we have
\begin{equation}
\log a_{\min} \leq \sum_{i} N_{i}(\omega|n_q) \log b_{i}-\sum_{ij}N_{ij}(\omega|L_{n_q}(\omega)) \log a_{ij}\leq 0.\end{equation}
Hence, \begin{eqnarray*}
\liminf_{q \rightarrow \infty} \frac{ \mu_{\beta(\omega)}(B_{n_q}(\omega))}{\log
{\prod_{\nu=1}^{n_q} b_{i_{\nu}}}}&\leq& \liminf_{q \rightarrow \infty} \frac{\sum_{ij}N_{ij}(\omega|L_{n_q}(\omega)) \log
\beta^{\omega}_{ij}/\beta^{\omega}_{i}}{\sum_{ij}N_{ij}(\omega|L_{n_q}(\omega)) \log
a_{ij}}+\frac{\sum_{i} N_{i}(\omega|n_q) \log  \beta^{\omega}_i}{\sum_{i}
N_{i}(\omega|n_q) \log b_{i}}\\
&\leq& \liminf_{q \rightarrow \infty} \frac{\sum_{ij}P_{ij}(\omega|L_{n_q}(\omega)) \log
\beta^{\omega}_{ij}/\beta^{\omega}_{i}}{\sum_{ij}P_{ij}(\omega|L_{n_q}(\omega)) \log
a_{ij}}+\frac{\sum_{i} P_{i}(\omega|n_q) \log  \beta^{\omega}_i}{\sum_{i}
P_{i}(\omega|n_q) \log b_{i}}.\\
\end{eqnarray*}

Since $\mathbf{\beta(\omega)}$ ${\rho}_{ij}/\beta^{\omega}_{ij}, \beta^{\omega}_{ij}/{\rho}_{ij} <
b_{\max}^{-\delta/2}$ for all $(i,j) \in \D$ such that $\rho_{ij} \neq 0$ we have
\begin{eqnarray*}
\liminf_{q \rightarrow \infty} \frac{\sum_{ij}P_{ij}(\omega|L_{n_q}(\omega)) \log
\beta^{\omega}_{ij}/\beta^{\omega}_{i}}{\sum_{ij}P_{ij}(\omega|L_{n_q}(\omega)) \log
a_{ij}}+\frac{\sum_{i} P_{i}(\omega|n_q) \log  \beta^{\omega}_i}{\sum_{i}
P_{i}(\omega|n_q) \log b_{i}}\\ \leq \liminf_{q \rightarrow \infty} \frac{\sum_{ij}P_{ij}(\omega|L_{n_q}(\omega)) \log
\rho_{ij}(n_q)/\rho_{i}(n_q)}{\sum_{ij}P_{ij}(\omega|L_{n_q}(\omega)) \log
a_{ij}}+\frac{\sum_{i} P_{i}(\omega|n_q) \log  \rho_{i}(n_q)}{\sum_{i} P_{i}(\omega|n_q) \log b_{i}}+ \delta.
\end{eqnarray*}
By the definition of $\mathbf{\rho}(n_q)$ (\ref{rho def})
\begin{eqnarray*}
\liminf_{q \rightarrow \infty} \frac{\sum_{ij}
P_{ij}(\omega|L_{n_q}(\omega)) \log
\rho_{ij}(n_q)/\rho_{i}(n_q)}{\sum_{ij}P_{ij}(\omega|L_{n_q}(\omega))
\log a_{ij}}
+\frac{\sum_{i} P_{i}(\omega|n_k) \log
\rho_{i}(n_q)}{\sum_{i} P_{i}(\omega|n_q) \log b_i}\\
= \liminf_{q
\rightarrow \infty} \frac{\sum_{ij} P_{ij}(\omega|L_{n_q}(\omega))
\log P_{ij}(\omega|L_{n_q}(\omega))/P_{i}(\omega|L_{n_q}(\omega))
}{\sum_{ij}P_{ij}(\omega|L_{n_q}(\omega)) \log
a_{ij}}
+\frac{\sum_i P_{i}(\omega|n_q) \log  P_{i}(\omega|n_q)
}{\sum_{i} P_i(\omega|n_q) \log b_{i}}.
\end{eqnarray*}

By the first condition on $n_q$ we have
\begin{eqnarray*}
\liminf_{q \rightarrow \infty} \frac{\sum_{ij} P_{ij}(\omega|L_{n_q}(\omega))
\log P_{ij}(\omega|L_{n_q}(\omega))/P_{i}(\omega|L_{n_q}(\omega))
}{\sum_{ij}P_{ij}(\omega|L_{n_q}(\omega)) \log
a_{ij}}\\ +\frac{\sum_{i}P_{i}(\omega|n_q) \log  P_{i}(\omega|n_q)
}{\sum_{i} P_i(\omega|n_q) \log b_{i}}\\
\leq \liminf_{q \rightarrow \infty} \frac{\sum_{ij} P_{ij}(\omega|L_{n_q}(\omega))
\log P_{ij}(\omega|L_{n_q}(\omega))/P_{i}(\omega|L_{n_q}(\omega))
}{\sum_{ij} P_{ij}(\omega|L_{n_q}(\omega)) \log
a_{ij}}\\
+\frac{\sum_{i} P_{i}(\omega|L_{n_q}(\omega)) \log  P_{i}(\omega|L_{n_q}(\omega))
}{\sum_{i} P_i(\omega|L_{n_q}(\omega)) \log b_{i}}.
\end{eqnarray*}
Since $\lim_{q \rightarrow \infty} \mathbf{P}(\omega|L_{n_q}(\omega))=\mathbf{P}$ and $\mathbf{P} \in A$ we have
\begin{eqnarray*}
\lim_{q \rightarrow \infty} \frac{\sum_{ij} P_{ij}(\omega|L_{n_q}(\omega))
\log P_{ij}(\omega|L_{n_q}(\omega))/P_{i}(\omega|L_{n_q}(\omega))
}{\sum_{ij}P_{ij}(\omega|L_{n_q}(\omega)) \log
a_{ij}}\\
+\frac{\sum_{i} P_{i}(\omega|L_{n_q}(\omega)) \log  P_{i}(\omega|L_{n_q}(\omega))
}{\sum_{i} P_i(\omega|L_{n_q}(\omega)) \log b_{i}}\\
= \frac{\sum_{ij} P_{ij} \log P_{ij}/P_{i} }{\sum_{ij}P_{ij} \log a_{ij}}+\frac{\sum_{i} P_{i} \log P_{i}}{\sum_{i}P_{ij}\log b_{i}} \leq s.
\end{eqnarray*}
Hence, for each $\omega \in \Omega$ we may find $\mathbf{\beta(\omega)} \in \mathbb{B}$ such that
\begin{equation} \label{PDimUB}\liminf_{n\rightarrow \infty}\frac{\log \mu_{\mathbf{\beta(\omega)}}( B_n(\omega))}{\log \prod_{\nu=1}^n b_{i_{\nu}j_{\nu}}} \leq s + \delta.
\end{equation}
Letting $\Lambda^{\Omega}(\beta):= \{x \in \Pi(\Omega): \beta(x)=\beta \}$ for each $\beta \in \mathbb{B}$ we have $\Pi(\Omega)=\bigcup_{\beta\in\mathbb{B}} \Lambda^{\Omega}(\beta)$. Moreover, by (\ref{PDimUB}) combined with Lemma \ref{SymbolicDimensionLemmas} (ii) we have $\dim \Lambda^{\Omega}(\beta)\leq s+\delta$ for each $\beta \in \mathbb{B}$. Since $\dim$ is closed under countable unions it follows that
$\dim \Pi(\Omega)\leq s+\delta$. Letting $\delta \rightarrow 0$ proves the lemma.
\end{proof}

We now make a quick digression to see how Lemma \ref{Key Lemma for the Upper Bound} implies the following generalization of a result due to Nielsen \cite{Nielsen}. Given $\p=(p_{ij})_{(i,j)\in\D}\in\Prob$ we define
\begin{equation}
\Lambda(\p):= \left\lbrace x=\Pi(\omega)\in \Lambda: \lim_{n\rightarrow \infty} P_{ij}(\omega|n)=p_{ij} \text{ for all } (i,j) \in \D \right\rbrace.
\end{equation}

\begin{corollary} \label{Nielsen} For each $\p\in\Prob$ $\dim(\Lambda(\p))= D_{LY}(\mu_{\p})$.
\end{corollary}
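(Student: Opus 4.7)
\emph{Proof proposal.} The plan is to combine the two sides of the multifractal machinery already developed in the excerpt. Throughout, by passing to the Lalley--Gatzouras subsystem indexed by $\D':=\{d \in \D : p_d > 0\}$, we may assume $p_d > 0$ for every $d \in \D$; this restriction preserves both $\Lambda(\p)$ and $D_{LY}(\mu_\p)$.

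The upper bound is immediate from Lemma \ref{Key Lemma for the Upper Bound} applied to $\Omega := \{\omega \in \Sigma : \mathbf{P}(\omega|n) \to \p\}$ with $A := \{\p\}$: every $x \in \Lambda(\p)$ has a preimage in $\Omega$, so $\Lambda(\p) \subseteq \Pi(\Omega)$ and therefore $\dim \Lambda(\p) \leq D_{LY}(\mu_\p)$.

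For the lower bound, the natural test measure is the Bernoulli measure $\mu_\p$ itself, applied via Lemma \ref{SymbolicDimensionLemmas} (i). Birkhoff's ergodic theorem applied to each indicator $\one_{[(i,j)]}$ shows that $\mu_\p$-a.e.\ $\omega$ satisfies $P_{ij}(\omega|n) \to p_{ij}$, so $\Pi(\omega) \in \Lambda(\p)$, and (using strict positivity of $\p$) also $R_n(\omega)/n \to 0$. On this full-measure set, the Bernoulli factorisation
\[
\mu_\p(B_n(\omega)) = \prod_{\nu=1}^{L_n(\omega)} p_{i_\nu j_\nu} \cdot \prod_{\nu=L_n(\omega)+1}^n p_{i_\nu},
\]
combined with the ergodic-theorem asymptotics for the logarithms of $p_{i_\nu j_\nu}$, $p_{i_\nu}$, $a_{i_\nu j_\nu}$ and $b_{i_\nu}$, together with the ratio $L_n(\omega)/n \to \lambda^v(\mu_\p,\sigma)/\lambda(\mu_\p,\sigma)$ from (\ref{Lyapunov Ratios}), should give after a short arithmetic simplification
\[
\lim_{n\to\infty} \frac{\log \mu_\p(B_n(\omega))}{\log \prod_{\nu=1}^n b_{i_\nu}} = \frac{h(\mu_\p,\sigma)}{\lambda(\mu_\p,\sigma)} + \Bigl(\frac{1}{\lambda^v(\mu_\p,\sigma)} - \frac{1}{\lambda(\mu_\p,\sigma)}\Bigr) h^v(\mu_\p,\sigma) = D_{LY}(\mu_\p).
\]
Assuming $\D$ is two-dimensional, Lemma \ref{SymbolicDimensionLemmas} (i) then delivers $\dim \Lambda(\p) \geq D_{LY}(\mu_\p)$.

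The only obstacle is the degenerate case in which the reduced $\D$ fails to be two-dimensional. This is handled exactly as in the opening lines of the proof of Lemma \ref{LB}: if all symbols in $\D$ share a single $i$-index then $h^v(\mu_\p,\sigma)=0$ and the estimate reduces to the conformal lower bound $\dim \chi^h(\Lambda(\p)) \geq h(\mu_\p,\sigma)/\lambda(\mu_\p,\sigma)$ supplied by Olsen \cite{Olsen Multifractal 1} Theorem 1 on the horizontal projection; the case of a single $j$ per $i$ is symmetric through $\chi^v$.
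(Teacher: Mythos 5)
Your proposal is correct and follows essentially the same route as the paper's one-line proof: upper bound via Lemma \ref{Key Lemma for the Upper Bound} with $A=\{\p\}$, lower bound via the strong law of large numbers applied to the Bernoulli measure $\mu_\p$ combined with the symbolic dimension lemma. One small note: the paper's proof cites Lemma \ref{SymbolicDimensionLemmas} (ii) for the lower bound, which appears to be a typo for part (i), and your use of part (i) (together with the reduction to a strictly positive subalphabet and the explicit treatment of the non-two-dimensional degenerate case) is the correct reading.
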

\begin{proof} The lower bound follows from several applications of the Kolmogorov's strong law of large numbers combined with Lemma \ref{SymbolicDimensionLemmas} (ii). The upper bound is an immediate consequence of Lemma \ref{Key Lemma for the Upper Bound} with $A=\{\p\}$.
\end{proof}
Returning to the proof of Theorem \ref{main} we obtain our first upper estimate for $\dim \Lambda_{\alpha}^{\varphi}$.
\begin{lemma}\label{upper estimate with var 1 error}
\begin{equation*}
\dim \Lambda^{\varphi}_{\alpha} \leq\sup \left\lbrace D_{LY}(\mu): \mu \in \B_{\sigma}(\Sigma), \big|\int \varphi d\mu -\alpha\big|\leq \var_1(\varphi) \right\rbrace.
\end{equation*}
\end{lemma}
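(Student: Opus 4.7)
The plan is to apply Lemma \ref{Key Lemma for the Upper Bound} with $\Omega = \Sigma^{\varphi}_{\alpha}$ and
\[
A := \left\{\p \in \Prob : \Big| \int \varphi\, d\mu_{\p} - \alpha \Big| \leq \var_1(\varphi)\right\}.
\]
Since each $\mu_{\p} \in \B_{\sigma}(\Sigma)$ by construction, the Key Lemma will deliver the claimed bound as soon as I show that every limit point $\p$ of the frequency-vector sequence $(\mathbf{P}(\omega|n))_{n \in \N}$ of any $\omega \in \Sigma^{\varphi}_{\alpha}$ lies in $A$. The key insight is that $\int \varphi\, d\mu$ depends on $\mu$ only through its first-coordinate marginal up to an error of $\var_1(\varphi)$, which is precisely the error built into the statement.

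\textbf{Empirical measures.} Fix $\omega \in \Sigma^{\varphi}_{\alpha}$ and a limit point $\p$ of $(\mathbf{P}(\omega|n))_{n \in \N}$, say along a subsequence $n_k$. Form the empirical measures $\mu_n := \frac{1}{n} \sum_{l=0}^{n-1} \delta_{\sigma^l \omega}$. By weak-$*$ compactness I pass to a further subsequence so that $\mu_{n_k} \to \mu^*$ weakly, where $\mu^* \in \M_{\sigma}(\Sigma)$ by the standard argument that weak-$*$ limits of empirical averages are shift invariant. Continuity of $\varphi$ gives
\[
\int \varphi\, d\mu^* = \lim_{k\to\infty}\frac{1}{n_k}\sum_{l=0}^{n_k-1}\varphi(\sigma^l\omega) = \alpha,
\]
while the fact that each cylinder $[d]$ is clopen yields $\mu^*([d]) = \lim_k P_d(\omega|n_k) = p_d = \mu_{\p}([d])$ for every $d \in \D$. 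Thus $\mu^*$ and $\mu_{\p}$ share the same level-one marginal, even though $\mu^*$ itself is only $\sigma$-invariant and not Bernoulli.

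\textbf{Level-one comparison and conclusion.} Splitting each integral according to the value of $\omega_1$ and using $|\varphi(\omega) - \varphi(\tau)| \leq \var_1(\varphi)$ whenever $\omega_1 = \tau_1$, I obtain
\[
\Big| \int \varphi\, d\mu^* - \int \varphi\, d\mu_{\p} \Big| \leq \sum_{d \in \D} \mu^*([d])\, \var_1(\varphi) = \var_1(\varphi),
\]
so $|\!\int \varphi\, d\mu_{\p} - \alpha| \leq \var_1(\varphi)$, i.e.\ $\p \in A$. Lemma \ref{Key Lemma for the Upper Bound} then gives $\dim \Lambda^{\varphi}_{\alpha} = \dim \Pi(\Sigma^{\varphi}_{\alpha}) \leq \sup\{D_{LY}(\mu_{\p}) : \p \in A\}$, which is the claim.

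\textbf{Main obstacle.} The only substantive step is the level-one comparison between the Birkhoff-limit measure $\mu^*$ and the Bernoulli surrogate $\mu_{\p}$: neither carries enough structure on its own (the former is not Bernoulli, the latter does not a priori see $\alpha$), and the bridge between them is precisely the observation that agreement on level-one cylinders forces agreement on $\int \varphi$ up to $\var_1(\varphi)$. The rest of the argument is a routine packaging of the Key Lemma.
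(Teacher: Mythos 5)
Your proposal is correct and takes essentially the same approach as the paper: apply Lemma \ref{Key Lemma for the Upper Bound} with $\Omega = \Sigma^{\varphi}_{\alpha}$ and $A$ defined by the $\var_1(\varphi)$-tolerance condition, then verify that limit points of the frequency vectors land in $A$ by exploiting the fact that $\varphi$ varies by at most $\var_1(\varphi)$ on each level-one cylinder. The only cosmetic difference is that you route the argument through a weak-$*$ limit $\mu^*$ of empirical measures and then compare $\mu^*$ with $\mu_{\p}$ cylinder by cylinder, whereas the paper compares the Birkhoff average directly with $\sum_{(i,j)} P_{ij}(\omega|n)\int_{[(i,j)]}\varphi\,d\mu_{\p}$ and passes to the limit; both hinge on the identical observation and give the same bound.
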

\begin{proof}
By Lemma \ref{Key Lemma for the Upper Bound} it suffices to show that given $\omega \in \Sigma_{\alpha}^{\varphi}$ and $\p=(p_{ij})\in \Prob$ a limit point for the sequence $(\mathbf{P}(\omega|n))_{n\in \N}$ we have $\big|\int \varphi d\mu_{\p} -\alpha\big|\leq \var_1(\varphi)$. Now given $(i,j)\in \D$ we have $\big|\int_{[(i,j)]} \varphi d\mu_{\p}-\varphi(\tau)\big|\leq \var_1(\varphi)$ for all $\tau\in \Sigma$ with $\tau_1=(i,j)$. Thus, for all $n\in \N$ we have
\begin{equation}
\bigg|\sum_{(i,j) \in \D} P_{ij}(\omega|n)\int_{[(i,j)]} \varphi d\mu_{\p}-\frac{1}{n} \sum_{l=0}^{n-1}\varphi(\omega^l)\bigg|\leq \var_1(\varphi).
\end{equation}
Since $\p$ is a limit point of $(\mathbf{P}(\omega|n))_{n\in \N}$ and $\omega \in  \Sigma_{\alpha}^{\varphi}$ this implies
\begin{equation}
\bigg|\sum_{(i,j) \in \D} p_{ij}\int_{[(i,j)]} \varphi d\mu_{\p}-\alpha\bigg|\leq \var_1(\varphi).
\end{equation}
 Since $\int \varphi d\mu_{\p}= \sum_{(i,j) \in \D} p_{ij}\int_{[(i,j)]} \varphi d\mu_{\p}$ this completes the proof of the lemma.
\end{proof}

Lemma \ref{upper estimate with var 1 error} proves the special case of Theorem \ref{main} for which $\var_1(\varphi)=0$. To prove the upper bound in Theorem \ref{main} in full generality requires a little more work. We iterate our Lalley-Gatzouras system many times to form new Lalley-Gatzouras systems to which we apply Lemma \ref{upper estimate with var 1 error} to obtain increasingly precise estimates for the upper bound. Take $k\in\N$. For each finite string $\xi:=\xi_1 \cdots \xi_k \in\D^k$ we let
\begin{equation}
S_{\xi}:=S_{\xi_1} \circ \cdots \circ S_{\xi_k}.
\end{equation}
It follows from the fact that $(S_{ij})_{(i,j) \in\D}$ is a Lalley-Gatzouras system that $(S_{\xi})_{\xi \in\D^k}$ is also a Lalley-Gatzouras system, which we call the $k$-th level Lalley-Gatzouras system. $\Sigma$ may be identified with the full shift $(\D^k)^{\N}$. The corresponding left shift is then just $k$ times the ordinary left shift, $\sigma^k$. Thus, in order to relate the $k$-th level Lalley-Gatzouras system back to our original Lalley-Gatzouras system we will require a lemma relating members of $\M_{\sigma^{k}}(\Sigma)$ to members of $\M_{\sigma}(\Sigma)$. Define a potential $A_k (\varphi):\Sigma \rightarrow \R$ by $A_k (\varphi):=\frac{1}{k}\sum_{l=0}^{k-1} \varphi\circ \sigma^l$ and for each $\nu \in \M_{\sigma^k}(\Sigma)$ define a Borel probability measure $A_k(\nu)$ by $A_k(\nu):=\frac{1}{k}\sum_{l=0}^{k-1} \nu \circ \sigma^{-l}$. Similarly if $\nu \in \M_{\sigma_v^k}(\Sigma_v)$ we let $A^v_k(\nu):=\frac{1}{k}\sum_{l=0}^{k-1} \nu \circ \sigma_v^{-l}$.

\begin{lemma}\label{averagemeasure}
Take $\nu \in \M_{\sigma^k}(\Sigma)$ and let $\mu=A_k(\nu)$. Then,
\begin{enumerate}
\vspace{2mm}
\item[(i)] $\mu\in\M_{\sigma}(\Sigma)$
\vspace{4mm}
\item[(ii)] $h(\mu,\sigma)=\frac{1}{k} h(\nu,\sigma^k)$
\vspace{4mm}
\item[(iii)] $h^v(\mu,\sigma)=\frac{1}{k}h^v(\nu,\sigma^k)$
\vspace{4mm}
\item[(iv)] $\lambda(\mu,\sigma)=\frac{1}{k} \lambda(\nu,\sigma^k)$
\vspace{4mm}
\item[(v)]  $\lambda^v(\mu,\sigma)=\frac{1}{k}\lambda^v(\nu,\sigma^k)$
\vspace{4mm}
\item[(vi)] $\int \varphi d\mu=\int A_k(\varphi)  d\nu$
\vspace{4mm}
\item[(vii)] $D_{LY}(\mu)= D_{LY}^k(\nu)$.
\end{enumerate}
\end{lemma}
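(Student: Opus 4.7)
The seven assertions are essentially bookkeeping once one observes that the averaging operator $A_k$ intertwines well with the dynamics. I would handle them in the order (i), (iv)--(vi), (ii)--(iii), (vii), grouping those that use the same tool.

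First, for (i), a direct computation suffices: by definition
\[
\sigma_*\mu=\frac{1}{k}\sum_{l=0}^{k-1}\nu\circ\sigma^{-(l+1)}=\frac{1}{k}\sum_{l=1}^{k}\nu\circ\sigma^{-l}=\mu,
\]
where the last equality uses $\nu\circ\sigma^{-k}=\nu$, i.e. the $\sigma^k$-invariance of $\nu$. Parts (iv), (v), (vi) are then straightforward integrations. For (iv) one writes
\[
\lambda(\nu,\sigma^k)=-\int\sum_{l=0}^{k-1}(\log a_{\omega_1})\circ\sigma^l\,d\nu=\sum_{l=0}^{k-1}\int(-\log a_{\omega_1})\,d(\nu\circ\sigma^{-l})=k\lambda(\mu,\sigma),
\]
using the change-of-variables identity $\int\psi\circ\sigma^l\,d\nu=\int\psi\,d(\nu\circ\sigma^{-l})$. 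The argument for (v) is identical with $b_{i_\cdot}$ in place of $a_{\omega_\cdot}$, and the argument for (vi) is even more direct: unfold $A_k(\varphi)$ and then average.

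The main step is (ii). Here I use two standard facts from ergodic theory. Affinity of the entropy map (\cite{Walters} Theorem 8.1) gives
\[
h(\mu,\sigma^k)=\frac{1}{k}\sum_{l=0}^{k-1}h(\nu\circ\sigma^{-l},\sigma^k).
\]
Because $\sigma$ commutes with $\sigma^k$, for each $l$ the map $\sigma^l\colon(\Sigma,\nu,\sigma^k)\to(\Sigma,\nu\circ\sigma^{-l},\sigma^k)$ is a measure-preserving conjugacy, so $h(\nu\circ\sigma^{-l},\sigma^k)=h(\nu,\sigma^k)$ and hence $h(\mu,\sigma^k)=h(\nu,\sigma^k)$. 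Combining this with the Abramov power rule $h(\mu,\sigma^k)=k\cdot h(\mu,\sigma)$, which applies by (i), gives (ii). For (iii), observe that $\pi\circ\sigma=\sigma_v\circ\pi$ and therefore $\pi(\mu)=A_k^v(\pi(\nu))$, so the same argument on $\Sigma_v$ with $\sigma_v$ in place of $\sigma$ yields (iii) verbatim.

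Finally, (vii) follows by substituting (ii)--(v) into the definitions of $D_{LY}$ and $D_{LY}^k$: the factors of $1/k$ in the numerators and denominators cancel. I do not expect any serious obstacle; the one point requiring care is the entropy identity in (ii)--(iii), where one must invoke both affinity of the entropy map and the Abramov power rule and verify that $\nu\circ\sigma^{-l}$ is legitimately $\sigma^k$-invariant with the same $\sigma^k$-entropy as $\nu$.
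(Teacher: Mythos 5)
Your proof is correct in outline but takes a more self-contained route than the paper: the paper disposes of parts (i), (ii), (iv), (v), (vi) by citing Lemma~2 of Johansson--Jordan--\"Oberg--Pollicott, derives (iii) exactly as you do via $\pi\circ\sigma=\sigma_v\circ\pi$, and notes (vii) is immediate from the rest. Your direct computations for (i), (iv), (v), (vi) and the deduction of (iii) and (vii) are all fine.

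One technical point in your treatment of (ii) deserves correction. The space $\Sigma=\D^{\N}$ is a \emph{one-sided} shift, so $\sigma$ is $|\D|$-to-one rather than invertible, and $\sigma^l\colon(\Sigma,\nu,\sigma^k)\to(\Sigma,\nu\circ\sigma^{-l},\sigma^k)$ is a (measure-preserving, intertwining) \emph{factor map}, not a conjugacy. In general, factor maps only give $h(\nu\circ\sigma^{-l},\sigma^k)\leq h(\nu,\sigma^k)$. The equality you need is still true, but the justification should be either that $\sigma^l$ is a bounded-to-one factor (so the relative entropy in the Abramov--Rokhlin formula vanishes), or that one passes to the natural extension on $\D^{\Z}$, where $\sigma$ becomes invertible and $\hat\sigma^l$ genuinely is an isomorphism, and natural extensions preserve entropy. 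Alternatively one can compute directly from a generator: with $\mathcal P$ the one-symbol partition and $\mathcal P_n=\bigvee_{i=0}^{n-1}\sigma^{-i}\mathcal P$, the $\sigma^k$-invariance of $\nu$ gives $\bigl|H_\nu(\sigma^{-l}\mathcal P_{km})-H_\nu(\mathcal P_{km})\bigr|\leq 2H_\nu(\mathcal P_l)$, so the two entropies agree after dividing by $m$ and letting $m\to\infty$. With this repair the rest of your argument for (ii), and hence (iii) and (vii), goes through.
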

\begin{proof}
Parts (i), (i), (iv), (v) and (vi) follow from \cite{non-uniformly hyperbolic} Lemma 2. Since $\pi \circ \sigma = \sigma_v \circ \pi$ we have $A^v_k(\pi(\nu))=\pi(A_k(\nu))$ and hence (iii) also follows from \cite{non-uniformly hyperbolic} Lemma 2. Part (vii) follows from parts (i), (ii), (iii), (iv) and (v).
\end{proof}

\begin{lemma}\label{upper estimate with var k error}
\begin{equation*}
\dim \Lambda^{\varphi}_{\alpha} \leq\sup \left\lbrace D_{LY}(\mu): \mu \in \B_{\sigma}(\Sigma), \big|\int \varphi d\mu -\alpha\big|\leq \var_k(A_k(\varphi)) \right\rbrace.
\end{equation*}
\end{lemma}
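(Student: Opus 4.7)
The plan is to pass to the $k$-th level Lalley-Gatzouras system $(S_\xi)_{\xi\in\D^k}$ and apply Lemma \ref{upper estimate with var 1 error} there with the averaged potential $A_k(\varphi)$, then transport the conclusion back to the original system via Lemma \ref{averagemeasure}.

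First I would record three compatibilities between the original system and its $k$-th iterate. The $k$-th level system is itself a Lalley-Gatzouras system and shares the limit set $\Lambda$ and the projection $\Pi$ with the original, provided one identifies $\Sigma$ with $(\D^k)^{\N}$ and replaces $\sigma$ by $\sigma^k$. Second, the simple telescoping identity
\[
\frac{1}{n}\sum_{l=0}^{n-1} A_k(\varphi)(\sigma^{kl}\omega)=\frac{1}{nk}\sum_{m=0}^{nk-1}\varphi(\sigma^{m}\omega)
\]
shows that the $\sigma^k$-Birkhoff level set of $A_k(\varphi)$ at $\alpha$ coincides with $\Sigma^{\varphi}_{\alpha}$, so the geometric level set $\Lambda^{\varphi}_{\alpha}$ is unchanged. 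Third, a cylinder of length $1$ in the $(\D^k)^{\N}$ picture is exactly a cylinder of length $k$ in the $\D^{\N}$ picture, so the variance appearing when one applies Lemma \ref{upper estimate with var 1 error} at level $k$ is precisely $\var_k(A_k(\varphi))$.

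With these identifications in place, Lemma \ref{upper estimate with var 1 error} applied to the $k$-th level system with potential $A_k(\varphi)$ delivers
\[
\dim\Lambda^{\varphi}_{\alpha}\leq\sup\left\{D^{k}_{LY}(\nu):\nu\in\B_{\sigma^{k}}(\Sigma),\ \bigg|\int A_k(\varphi)\,d\nu-\alpha\bigg|\leq\var_k(A_k(\varphi))\right\}.
\]
For each admissible $\nu$ in this supremum I would then set $\mu:=A_k(\nu)$. By parts (i), (vi) and (vii) of Lemma \ref{averagemeasure} we get $\mu\in\M_{\sigma}(\Sigma)$, $\int\varphi\,d\mu=\int A_k(\varphi)\,d\nu$, and $D_{LY}(\mu)=D^{k}_{LY}(\nu)$, so the right-hand side above is bounded by the supremum in the statement of the lemma, completing the proof.

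The only genuinely non-routine step is the first one: verifying that the $k$-th level system satisfies the axioms of Definition \ref{GL IFS def}. The products $a_{\xi_1}\cdots a_{\xi_k}$ and $b_{i_{\xi_1}}\cdots b_{i_{\xi_k}}$ must factor cleanly in the vertical coordinate, and the ordering and non-overlap conditions on the base and stacked intervals must survive one composition; once these are checked (inductively on $k$), the rest of the argument is bookkeeping, and the variance identification and the translation through Lemma \ref{averagemeasure} are both immediate from the definitions.
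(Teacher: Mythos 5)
Your proposal matches the paper's proof essentially verbatim: pass to the $k$-th level Lalley-Gatzouras system, use the telescoping identity to identify the $\sigma^k$-Birkhoff level set of $A_k(\varphi)$ with $\Sigma^\varphi_\alpha$, apply Lemma \ref{upper estimate with var 1 error} at level $k$ to get the intermediate bound with $D^k_{LY}$ over $\B_{\sigma^k}(\Sigma)$, and transport back via Lemma \ref{averagemeasure}. One small point, present equally in your write-up and the paper's: the final step produces $\mu = A_k(\nu) \in \M_\sigma(\Sigma)$, which is generally \emph{not} a Bernoulli measure, so the argument actually delivers the supremum over $\M_\sigma(\Sigma)$ rather than over $\B_\sigma(\Sigma)$ as the lemma's statement reads; since the paper only ever invokes the $\M_\sigma(\Sigma)$ form immediately afterward (when combining with Lemma \ref{continuity}), this is evidently a typo in the statement and not a gap in either proof.
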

\begin{proof} First note that $\frac{1}{n}\sum_{l=0}^{n-1}A_k(\varphi)\circ(\sigma^k)^l=\frac{1}{nk}\sum_{l=0}^{nk-1}\varphi\circ\sigma^l$ and hence $\lim_{n\rightarrow \infty}\frac{1}{n}\sum_{l=0}^{n-1}A_k(\varphi)((\sigma^k)^l(\omega))=\alpha$ for all $\omega \in \Sigma^{\varphi}_{\alpha}$. Thus, by applying Lemma \ref{upper estimate with var 1 error} to our $k$-th level Lalley-Gatzouras
system and noting that words of length $k$ and $\sigma^k$ invariant measures in the original system correspond,
respectively, to digits and shift invariant measures in the $k$-th level Lalley-Gatzouras system we have
\begin{equation} \label{k eq}
\dim \Lambda^{\varphi}_{\alpha} \leq\sup \left\lbrace D^k_{LY}(\nu): \nu \in \B_{\sigma^k}(\Sigma), \big|\int A_k(\varphi) d\nu -\alpha\big|\leq \var_k(A_k(\varphi)) \right\rbrace.
\end{equation} Combining (\ref{k eq}) with Lemma \ref{averagemeasure} proves the lemma.
\end{proof}
Since $\varphi \in C(\Sigma)$ is continuous we have $\lim_{k\rightarrow \infty} \var_k(A_k(\varphi))=0$. Thus, by Lemma \ref{upper estimate with var k error}, we have shown
\begin{equation}
\dim \Lambda^{\varphi}_{\alpha} \leq \sup \left\lbrace D_{LY}(\mu): \mu \in \M_{\sigma}(\Sigma), \big|\int \varphi d\mu -\alpha\big|\leq \epsilon \right\rbrace
\end{equation} for arbitrarily small $\epsilon>0$ and so by Lemma \ref{continuity} the result follows.

\section{Remarks}
Following Olsen and Winter \cite{Olsen Multifractal 1}, \cite{Olsen Winter} one may consider more general types of level sets. Given $A \subseteq [\alpha_{\min}(\varphi), \alpha_{\max}(\varphi)]$ we let $\overline{\Sigma^{\varphi}_{A}}$ denote the set of $\omega \in \Sigma$ for which every accumulation point of the sequence $\left( A_n(\varphi)(\omega)\right)_{n \in \N}$ lies within $A$ and $\overline{\Lambda^{\varphi}_{A}}:=\Pi(\overline{\Sigma^{\varphi}_{A}})$ its projection by $\Pi$. Then, by essentially the same argument as above, we have
\begin{equation}
\dim{\overline{\Lambda^{\varphi}_{A}}}= \sup \left\lbrace D_{LY}(\mu): \mu \in \M_{\sigma}(\Sigma), \int \varphi d\mu\in A \right\rbrace.
\end{equation}
Now suppose $A \subseteq [\alpha_{\min}(\varphi), \alpha_{\max}(\varphi)]$ is a compact sub-interval. Let $\underline{\Sigma^{\varphi}_{A}}$ denote the set of $\omega \in \Sigma$ for which the set of accumulation points of the sequence $\left( A_n(\varphi)(\omega)\right)_{n \in \N}$ is equal to $A$ and $\underline{\Lambda^{\varphi}_{A}}:=\Pi(\underline{\Sigma^{\varphi}_{A}})$.
By employing the methods of \cite{Olsen Winter} Theorem 3.1, along with a few ideas from section \ref{Proof of the lower bound}, one can prove the following lower estimate
\begin{equation} \label{OW sets}
\dim{\underline{\Lambda^{\varphi}_{A}}}\geq \inf_{\alpha \in A}\sup \left\lbrace D_{LY}(\mu): \mu \in \M_{\sigma}(\Sigma), \int \varphi d\mu=\alpha \right\rbrace.
\end{equation}
In particular, the projection of the set of points $\omega \in \Sigma$ for which $\left( A_n(\varphi)(\omega)\right)_{n \in \N}$ does not converge has dimension $\dim \Lambda$. However, it seems very plausible that the lower bound given by (\ref{OW sets}) is not always optimal and it would be interesting to know what the exact value of $\dim{\underline{\Lambda^{\varphi}_{A}}}$ is.


\begin{thebibliography}{15}
\bibitem{Barral Feng}
J. Barral and D. Feng, \textit{Weighted thermodynamic formalism and applications}, (2009). arXiv:0909.4247v1.
\bibitem{Barral Mensi}
J. Barral and M. Mensi, \textit{Multifractal analysis of Birkhoff averages on `self-affine' symbolic spaces}. Nonlinearity 21 (2008), no. 10, 2409-2425.
\bibitem{Baranski}
K. Bara\'{n}ski, \textit{Hausdorff dimension of the limit sets of some planar geometric constructions} Adv. Math. 210.1 (2007), 391-415.
\bibitem{Barriera Saussol}
L. Barriera and B. Saussol, \textit{Variational principles and mixed multifractal spectra}. Trans. Amer. Math. Soc. 353 (2001), no. 10, 3919-3944.
\bibitem{Bedford}
T. Bedford, \textit{PhD Thesis: Crinkly curves, Markov partitions and box dimension of self-similar sets}. Ph.D. thesis, University of Warwick (1984).
\bibitem{Falconer Techniques}
K. Falconer, \textit{Techniques in Fractal Geometry}. John Wiley and Sons, Ltd., Chichester, (1997).
\bibitem{Fan Feng Wu}
A. Fan, D. Feng and J. Wu, \textit{Recurrence, dimension and entropy}.  J. London Math. Soc. (2) 64 (2001), no. 1, 229--244.
\bibitem{Gatzouras Lalley}
S. P. Lalley and D. Gatzouras, \textit{Hausdorff and Box Dimension
of certain Self-Affine Fractals}, Indiana Univ. Math. J. 41 (1992), 533.
\bibitem{Gelfert Rams}
K. Gelfert and M. Rams, \textit{The Lyapunov spectrum of some parabolic systems}. Ergodic Theory Dynam. Systems 29 (2009), no. 3, 919--940.
\bibitem{non-uniformly hyperbolic}
A. Johansson, T. Jordan, A. Oberg, and M. Pollicott, \textit{Multifractal analysis of non-uniformly
hyperbolic systems}. Isreal J. Math., Vol 177, 125-144, (2008).
\bibitem{Jordan Simon}
T. Jordan and K. Simon, \textit{Multifractal Analysis of Birkhoff Averages for some Self-Affine IFS}. Dynamical Systems, Vol 22, Issue 4, (2007), 469-483.
\bibitem{LYMED2}
F. Ledrappier and L. S. Young, \textit{The Metric Entropy of Diffeomorphisms: Part II: Relations between Entropy, Exponents and
Dimension}, Ann. of Math. 122 (1985)
509-574.
\bibitem{McMullen}
C. McMullen, \textit{The Hausdorff Dimension of General Sierpinski Carpets}. Nagoya Maths Journal, vol. 96 (1984).
\bibitem{Nielsen}
O. Nielsen, \textit{The Hausdorff and packing dimensions of some sets related to Sierpinski carpets}. Canad. J. Math. 51 (1999), 1073-1088.
\bibitem{Olsen Multifractal 1}
L. Olsen, \textit{Multifractal analysis of divergence points of deformed measure theoretical Birkhoff averages}. J. Math. Pures Appl. (9) 82 (2003), no. 12, 1591--1649.
\bibitem{Olsen Winter}
L. Olsen and S. Winter, \textit{Multifractal analysis of divergence points of deformed measure theoretical Birkhoff averages. II. Non-linearity, divergence points and Banach space valued spectra}. Bull. Sci. Math. 131 (2007), no. 6, 518--558.
\bibitem{Pesin}
Y. Pesin, \textit{Dimension Theory in Dynamical Systems. Contemporary views and applications}. Chicago Lectures in Mathematics. University of Chicago Press, Chicago, IL, (1997).
\bibitem{Pesin Weiss Birkhoff}
Y. Pesin and H. Weiss, \textit{The Multifractal Analysis of Birkhoff Averages and Large Deviations}. Global Analysis of Dynamical Systems, 419-431. Inst. Phys., Bristol, (2001).
\bibitem{Walters}
P. Walters, \textit{An Introduction to Ergodic Theory}. Graduate Texts in Mathematics, 79. Springer-Verlag, New York-Berlin, (1982).
\end{thebibliography}
\end{document}